\lbrace\begin{array}{@{}l@{}}}%
\newcommand{\e}{\varepsilon}
\newcommand{\N}{\mathbb{N}}
\renewcommand{\d }{\delta }
\newcommand{\beq}{\begin{equation}}
\newcommand{\eeq}{\end{equation}}
\newcommand{\ee}{{\mathtt e}}
\newtheorem{theorem}{Theorem}[section]
\newtheorem*{theorem*}{Theorem}
\newtheorem{lemma}[theorem]{Lemma}
\newtheorem{definition}[theorem]{Definition}
\newtheorem{proposition}[theorem]{Proposition}
\theoremstyle{definition}
\newtheorem{remark}[theorem]{Remark}
\author[G. Mancini]{Gabriele Mancini}
\address{\noindent  Dipartimento di Matematica, Universit\'a degli Studi di Bari Aldo Moro,Italy }
\email{gabriele.mancini@uniba.it}
\author[G. M. Rago]{Giuseppe Mario Rago}
\address{\noindent  Dipartimento di Matematica, Universit\'a degli Studi di Bari Aldo Moro,Italy }
\email{g.rago6@phd.uniba.it}
\author[G. Vaira]{Giusi Vaira}
\address{\noindent  Dipartimento di Matematica, Universit\'a degli Studi di Bari Aldo Moro,Italy }
\email{giusi.vaira@uniba.it}
\thanks{Work partially supported by 
the MUR-PRIN-P2022YFAJH ``Linear and Nonlinear PDE’s: New directions and Applications" and by the INdAM-GNAMPA projects 2025 ``Fenomeni non lineari: problemi locali e non locali e loro applicazioni" and ``{Critical and limiting phenomena in nonlinear elliptic systems}" CUP E5324001950001. The authors thank PNRR MUR project CN00000013 HUB - National Centre for HPC, Big Data and Quantum Computing (CUP H93C22000450007).}
\subjclass{35B44, 35B33, 35J25, 35J08}
\keywords{Critical problems; Brezis-Nirenberg; blowing-up solutions; annulus; Green function; Gegenbauer Polynomials.}
\begin{document}
\title[Almost critical problems on an annulus]{Existence of positive solutions for a class of almost critical problems on an annulus}
\maketitle 
\begin{abstract}
In this paper we will consider multi-peaks positive solutions for a class of slightly subcritical or slightly supercritical elliptic problems on an annulus with Dirichlet boundary conditions. By using the explicit form of the Green function and of the Robin function on the annulus, we prove that the annulus becomes thinner and thinner when the number of bumps increases for  the slightly subcritical case, while the hole of the annulus is very small for the slightly supercritical case.
\end{abstract}

\section{Introduction}
In the last decades, there has been considerable interest in the study of positive solutions to the following critical problem 
\beq\label{p1}
\left\{\begin{aligned} &-\Delta u =u^{2^*-1} \quad &\mbox{in}\,\, \Omega\,\\
& u>0\quad &\mbox{in}\,\, \Omega\,\\
&u=0\quad &\mbox{on}\,\, \partial\Omega,\end{aligned}\right.\eeq
 where $\Omega\subset \mathbb R^N$ with $N\geq 3$  is a smooth and bounded domain and $2^*:=\frac{2N}{N-2}$ is the critical exponent for the well known Sobolev embedding.\\
 The interest in this equation relies on many physical and geometrical motivations. For example, \eqref{p1} is a special case of the classical Yamabe problem in differential geometry and it is related to some mathematical models in astrophysics (see \cite{Horedt} for physical motivations).\\ From the mathematical point of view the interest comes from the fact that, despite the simple form of problem \eqref{p1}, the structure of the solution set is surprisingly rich and very complex. This is why a vast literature on \eqref{p1} is available regarding existence, multiplicity and qualitative behavior of solutions. Even if many contributions are given in the last decades, some basic issues are still open.\\
The criticality of the problem \eqref{p1} affects the solvability question: for instance, Pohozaev \cite{P} proved that \eqref{p1} has no solutions under the assumption that $\Omega$ is star-shaped.\\
 On the other hand, Kazdan and Warner showed in \cite{KW} that if $\Omega$ is an annulus then \eqref{p1} has a (unique) positive solution in the class of functions with radial symmetry. In \cite{BP}, the authors study also the asymptotic
behavior of this solution as the inner radius of the annulus tends to zero.\\ From the paper of Kazdan and Warner comes the idea of exploiting the topology of $\Omega$ in order to get solutions for \eqref{p1}. Indeed,  in the non-symmetric case,
Coron \cite{Coron} found via variational methods that \eqref{p1} is solvable and that it admits a positive solution under the assumption that $\Omega$ is a domain with a small hole. \\ After that, a substantial improvement of this result was obtained by Bahri and Coron \cite{BC}, showing that if some homology group of $\Omega$ with coefficients in $\mathbf Z_2$ is not trivial, then \eqref{p1} has at least one positive solution. Nevertheless, Ding \cite{Ding} (see also Dancer \cite{Dancer}) gave the example of contractible domains on which a solution still exists, showing that both the topology and the geometry of the domain play a crucial role. \\
An interesting question is the study of the asymptotic behavior of Coron’s solution as the size of the hole tends to zero. Under the assumption that the hole is symmetric (a ball of radius $\rho$), then the solution concentrates around
the hole and it converges in the sense of measure, as $\rho\to 0$, to a Dirac delta centered at the center of the hole. In the
literature this is what is known as a (simple) bubbling solution. We refer the reader to \cite{Lewandowski, LYY, Rey1} where the study of
existence of positive solutions to \eqref{p1} in domains with several small symmetric holes and their asymptotic behavior
as the size of the holes goes to zero is carried out.\\
Also the transition between slightly subcritical power and slightly supercritical power implies a great change in the structure of the solution set of problem \eqref{p1}.\\ Specifically, we let the problems

\[
(\mathcal P_\pm^\e)\quad \begin{cases}\begin{aligned} &-\Delta u =u^{2^*-1\pm\varepsilon} \quad &\mbox{in}\,\, \Omega\,\\
& u>0\quad &\mbox{in}\,\, \Omega\,\\
&u=0\quad &\mbox{on}\,\, \partial\Omega,\end{aligned}
\end{cases}
\]
where $\varepsilon>0$ is small.\\
Solvability in the two regimes is very different. For instance, for problem $(\mathcal P_+^\e)$ the variational approaches are not available and the Pohozaev's result can be applied again giving no existence at least in a star-shaped domain. \\ Also the Kazdan and Warner's result \cite{KW} holds and hence in a symmetric annulus a radial solution can be again obtained by a minimization argument.\\ 
Hence, also in this case, the topology and the geometry of the domain are crucial conditions for the existence result, even though finding general conditions is a well-known open problem. Indeed, a question raised by Brèzis in \cite{Brezis} is whether the presence of a nontrivial topology in the domain suffices for solvability in the supercritical case (with a power $p>2^*-1$). The answer to this question is negative since Passaseo in \cite{Passaseo1, Passaseo2} constructed domains in dimension $N\geq 4$ for which no solutions exist for powers $p$ near the second critical exponent $\frac{N+1}{N-3}$. 
The nontrivial topology is, however, a necessary condition as shown by Molle and Passaseo \cite{MP}. 
Instead, for problem $(\mathcal P_-^\e)$ the least energy solution (that can be found by using variational arguments) exists and, as $\e\to 0^+$, blows-up at a  critical point of the Robin function of the domain $\Omega$. Moreover, under suitable conditions on the shape of the domain, several papers deal with multiple blow-up solutions (see \cite{BLR, BrP, Han, FW, Rey2, Rey3}). However, in a convex domain, multiple blow-up phenomenon cannot occur. Grossi and Takahashi \cite{GT}
 proved the nonexistence of positive solutions for the problem $(\mathcal P_-^\e)$ blowing up at more than one point. In contrast, the single bubble phenomenon does not appear  for problem $(\mathcal P_+^\e)$ as $\e\to 0^+$ as it is shown by Ben Ayed, El Mehdi, Grossi and Rey in \cite{BEGR}. However, multi-bubble solutions have been found in a domain with small holes both in the symmetric case and in the non symmetric case (see \cite{Daprile, DFM1, DFM2, PR}).
\\ In particular, in \cite{Daprile}, by using a finite dimensional variational reduction approach, the author constructs a family of blowing up positive solutions for $(\mathcal P_+^\e)$ when 
the domain $\Omega\subset \mathbb R^N$ has a non symmetric small hole with $N\geq 3$. Indeed, the solution looks like \beq\label{approx}u_\e(x) \sim \sum_{i=1}^k U_{\delta_i, \xi_i}(x)\quad \mbox{as}\,\, \e\to 0^+\eeq where $k\geq 3$ is an odd fixed integer, $\delta_i:=\delta_i(\e)\to 0$ as $\e\to 0^+$ and \beq\label{bubble} U_{\delta, \xi}(x):=\alpha_N \left(\frac{\delta}{\delta^2+|x-\xi|^2}\right)^{\frac{N-2}{2}},\quad \delta>0, \xi\in\mathbb R^N,\quad \alpha_N:=(N(N-2))^{\frac{N-2}{4}}\eeq
 are all the positive solutions of the limiting equation \beq\label{le}-\Delta U=U^{2^*-1},\quad \mbox{in}\,\,\ \mathbb R^N.\eeq In order to explain the major challenge in the construction, we first fix some notations.\\
 We let $G(x, y)$ be the Green function of $-\Delta$ over $\Omega$ under Dirichlet boundary conditions, namely $G$ satisfies 
 \beq\label{green}
 \left\{\begin{aligned}&-\Delta_x G(x, y)=\delta_y(x),\quad &x\in\Omega\\
 &G(x, y)=0, \quad &x\in\partial\Omega.\end{aligned}\right.\eeq
 Let also $H(x, y)$ be its regular part: 
 \beq\label{H}
H(x, y)=\frac{1}{\omega_{N-1}(N-2)}\frac{1}{|x-y|^{N-2}}-G(x, y),\eeq where $\omega_{N-1}$ is the surface measure of the unit sphere in $\mathbb R^N$. The value of $H$ on the diagonal, namely the function $\tau(x):=H(x, x)$, is called the Robin function of the domain $\Omega$.  Given $k\in \N$, $k\geq 2$, and $\boldsymbol\xi= (\xi_1,\ldots,\xi_k)\in \Omega^k$, we also denote by $M(\boldsymbol\xi)=(m_{ij})_{1\leq i, j\leq k}$ the matrix defined by
\beq\label{matrixM}
m_{ii}:=\tau(\xi_i),\quad m_{ij}=-G(\xi_i, \xi_j), \,\, i\neq j\,, \eeq and we denote by $\Lambda_1(\boldsymbol\xi)$ its first eigenvalue. Using the Perron-Frobenius theorem it is easy to show (see \cite{BLR}) that the eigenvalue $\Lambda_1(\boldsymbol\xi)$ is simple and the corresponding eigenvector can be chosen to
have strictly positive components. We denote by $\boldsymbol\ee(\boldsymbol\xi):=(\ee_1(\boldsymbol\xi), \ldots, \ee_k(\boldsymbol\xi))^T\in\mathbb R^k$ the unique vector such that
\begin{equation}\label{impM}M(\boldsymbol\xi)\boldsymbol\ee(\boldsymbol\xi)=\Lambda_1(\boldsymbol\xi)\boldsymbol\ee(\boldsymbol\xi)\ \hbox{and}\  \boldsymbol\ee_1(\boldsymbol\xi)=1.\end{equation}
\noindent After the reduction procedure, the major difficulty in \cite{Daprile} is to find the parameters $\delta_i\sim \e^{\frac{1}{N-2}}d_i$ and the points $\xi_i$ which turn out  to be {\it stable} critical points of the functional \beq\label{psi+} \Psi_+(\mathbf{d}, \boldsymbol\xi):=\frac 12 \left(M(\boldsymbol\xi) \mathbf{d}^{\frac{N-2}{2}}, \mathbf{d}^{\frac{N-2}{2}}\right) + B \log (d_1 \cdot\ldots\cdot d_k),\quad B>0\eeq on every compact subset of its domain $$\mathcal M:=\left\{(\mathbf{d}, \boldsymbol\xi)\,:\,  d_i>0,\,\, \xi_i\in\Omega\,\,\, \xi_i\neq \xi_j\,\, i\neq j\right\}.$$ Clearly, the dependence of the functional $\Psi_+$ on the scaling parameters and also on the points makes the situation more complicated. So in \cite{Daprile} a delicate min-max scheme is set up in a good configuration space in order to characterize a topologically nontrivial critical point of $\Psi_+$ which is stable under $C^1$ - perturbations. The geometric assumption on the small size of the hole is crucial in order to compare the Green function and its regular part near the hole. This helps to find the good configuration space that is confined in the
set of $k$-tuples $\boldsymbol\xi=(\xi_1, \ldots, \xi_k)$ for which the first eigenvalue $\Lambda_1(\boldsymbol\xi)$ is negative or, equivalently, the quadratic
form associated to the matrix $M(\boldsymbol\xi)$ is not positive definite. We remark that in \cite{DFM2}, when $\Omega$ is an annulus, the authors are able to show that when $k$ is large then the minimum of the first eigenvalue of the matrix $M(\boldsymbol\xi)$ is negative.  \\\\
If, instead, one considers the subcritical problem $(\mathcal P_-^\e)$, then a similar construction can be done in any kind of domain. However, if one wants to construct a positive solution with $k$ blow-up points as in \eqref{approx} then, after the reduction scheme, the parameters $d_i>0$ and the points $\xi_i$ are now the {\it stable} critical points of the functional   \beq\label{tildepsi} \Psi_-(\mathbf{ d}, \boldsymbol\xi):=\frac 12 \left(M(\boldsymbol\xi) \mathbf{d}^{\frac{N-2}{2}}, \mathbf{d}^{\frac{N-2}{2}}\right) -B\log (d_1 \cdot\ldots\cdot d_k), \quad B>0\eeq on every compact subset of its domain $\mathcal M$ defined as before.\\ In \cite{BLR} it is shown the existence of a positive solution  of $(\mathcal P_-^\e) $ with $k$ peaks in a generic bounded smooth domain. However, now, the good configuration space is confined in the set of $k-$ tuples $\boldsymbol\xi$ for which the first eigenvalue $\Lambda_1(\boldsymbol\xi)$ is positive. The case of two peaks solutions was previously presented in \cite{Rey3}. At the moment, as far as we know, for problem $(\mathcal P_-^\e)$ there are no results that guarantee the positivity of $\Lambda_1(\boldsymbol\xi)$ also in particular domains. \\\\
We also remark that problems $(\mathcal P_\pm^\e)$ have many analogies with the following problems of Brezis-Nirenberg form, i.e.
\[
(\mathcal {BN}_\mp)\quad \begin{cases}\begin{aligned} &-\Delta u =u^{2^*-1}\mp\e u, \quad &\mbox{in}\,\, \Omega\\
& u>0\quad &\mbox{in}\,\, \Omega\\
&u=0\quad &\mbox{on}\,\,\ \partial\Omega\end{aligned}
\end{cases}
\hspace{0.5cm}
\]
These problems are studied in \cite{BN}, where a positive solution for $(\mathcal{BN}_+^\e)$ is found provided $\e\in (0, \lambda_1(\Omega))$ (with $N\geq 4$), being $\lambda_1(\Omega)$ the first eigenvalue of $-\Delta$ with Dirichlet boundary conditions. A similar result holds in dimension $N=3$ but now $\e\in (\lambda^*(\Omega), \lambda_1(\Omega))$ with $\lambda^*(\Omega)>0$.\\ In the famous paper of Brezis and Nirenberg it is also showed that a Pohozaev's identity holds for problem $(\mathcal{BN}_-^\e)$ and again one has to play with the geometry and the topology of the domain in order to obtain a solution of the problem (see \cite{MuPi1} where it is considered the double blow-up phenomenon in a domain with a small hole in dimension $N\geq 5$ and references therein).
\\ The literature is very wide for problem $(\mathcal{BN}_+^\e)$ (see \cite{LVWW} for the history and the main results on it). When $N\geq 5$,  the question of the existence of a multi-peak solution is raised  in \cite{MuPi}, showing that the function responsible for the existence is 
\beq\label{phi} \Phi_+(\mathbf d, \boldsymbol\xi):=\frac 12 \left(M(\boldsymbol\xi) \mathbf{d}^{\frac{N-2}{2}}, \mathbf{d}^{\frac{N-2}{2}}\right) - \frac 12 B\sum_{j=1}^k d_j^{2}, \quad B>0.\eeq Again a {\it stable} critical point $(d_0, \boldsymbol\xi_0)\in\mathcal M$ of $\Phi_+$ gives the existence of a family of positive solutions that blows up at $\boldsymbol\xi_0$. Moreover they also exhibit classes of domains where such critical points of $\Phi_+$ can be found (f.i. $\Omega$ is a dumbbell type domain).\\
The multiple concentration for problem $(\mathcal{BN}_+^\e)$ in low dimensions, namely $N=3$ and $N=4$, is a very recent story (see \cite{MS} and \cite{PRV}) in which also the case of the annulus is considered as an example of domain in which a critical point of $\Phi_+$ can be found. However, to its difficulty, in \cite{MS} and \cite{PRV} only the case of two peaks in the annulus is considered.\\\\
In what follows we want to cover some gaps for these problems. Here we consider the case of the annulus $$\Omega_\rho:=\left\{x\in\mathbb R^N\,\,\:\,\,\,\ \rho<|x|<1\right\}, \quad 0<\rho<1,$$
aiming at showing the existence of multi-bubble solutions concentrating at $k$ points in the $(x_1, x_2)$-plane at equal distance from the origin and spaced at uniform angles, that is
\begin{equation}\label{punti}
	\xi_{j}(r) =(re^{2\pi \mathtt i \frac{j-1}{k}},\boldsymbol 0) \in \mathbb{R}^{2} \times \mathbb{R}^{N-2} \ \ \ \ j=1, \ldots k,\quad r\in (\rho, 1).
\end{equation}
We look for symmetric solutions in the space 
\begin{equation*}  \begin{aligned}
		H_s:=&\left\{u\in H^1_0(\Omega_\rho)\, :\, u\,  \hbox{is even in}\, x_h, \, h=2,...,N,\right.\\
		&\left.u(r\cos\theta, r\sin\theta, x'')=u\left(r\cos\left(\theta+\frac{2\pi}{k}\right),r\sin \left(\theta+\frac{2\pi}{k}\right),x''\right)\right\}.
	\end{aligned}
\end{equation*}

\noindent The existence of such solution depends on the inner radius $\rho$. For problem $(\mathcal P_-^\e)$ and $(\mathcal{BN}_+^\e)$ we prove that the size of $\rho$ increases (i.e. the annulus is very thin) when the number of peaks becomes larger and larger and no solution of this form exists when $\rho$ is near zero (i.e. the hole of the annulus is very small) .\\ On the contrary, for problems $(\mathcal P_+^\e)$ and $(\mathcal{BN}_-^\e)$ the situation is opposite, namely a solution of this form can be found if  the annulus has a small hole, while it does not exist when $\rho$ is near $1$.\\
We remark that for problem  $(\mathcal{BN}_-^\e)$ a double-blow up is done in \cite{MuPi1} for $N\geq 5$ and nothing is known in dimension $N=4$. Here we consider the general case for $N\geq 4$.\\\\
The main results of the paper are the following.
\begin{theorem}\label{main1}
Let $k\geq 2$  and $N\geq 3$ be fixed. Then there exists $\rho_k\in (0, 1)$  such that: 
\begin{itemize}
	\item[i)] if $\rho\in (\rho_k, 1)$ then there exists $r_0=r_0(\rho)\in (\rho,1)$ such that, for any sufficiently small $\e>0$, there exists a positive solution of $(\mathcal P_-^\e)$ and  $(\mathcal{BN}_+^\e)$ in $H_s$, which concentrates as $\e  \to 0$ at $k$ points $\xi_1(r_0),\ldots \xi_k(r_0)$ defined by \eqref{punti}.
	\item[ii)] for any $\rho \in (0,\rho_k)$, problems $(\mathcal P_-^\e)$ and $(\mathcal{BN}_+^\e)$ have no family of positive solutions in $H_s$  concentrating as $\e \to 0$ at $k$ points of the form \eqref{punti}.
\end{itemize}
\end{theorem}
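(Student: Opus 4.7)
The plan is to combine a symmetry-constrained Lyapunov--Schmidt reduction with a sharp analysis of the reduced functional restricted to the one-parameter family of configurations \eqref{punti}. First, I would seek a solution of the form $u_\e=\sum_{j=1}^{k}U_{\delta,\xi_j(r)}+w_\e$, with $w_\e$ a small remainder orthogonal to the standard kernel of the linearization at the sum of bubbles. Membership in $H_s$ forces a common scale $\delta$ and the geometric configuration $\xi_j=\xi_j(r)$. Adapting the reduction schemes of \cite{BLR,Daprile,PRV} to the symmetric class (together with the low-dimensional refinements of \cite{MS,PRV} for $N\in\{3,4\}$ in the $(\mathcal{BN}_+^\e)$ case), solving either problem in $H_s$ becomes equivalent, after rescaling $\delta$ by the appropriate power of $\e$, to finding a critical point $(d,r)\in(0,\infty)\times(\rho,1)$ of the restriction $\widetilde\Psi(d,r)$ of $\Psi_-$ (resp.\ $\Phi_+$) to the symmetric slice $\mathbf d=d\cdot\mathbf 1$, $\boldsymbol\xi=(\xi_1(r),\ldots,\xi_k(r))$.

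Along this slice the matrix $M(\boldsymbol\xi)$ is circulant; since $G>0$ off the diagonal, the positive constant vector $\mathbf 1$ is the unique (up to scaling) positive eigenvector and realises the smallest eigenvalue, which by \eqref{impM} is
$$
\Lambda_1(r)=\tau\bigl(\xi_1(r)\bigr)-\sum_{j=2}^{k}G\bigl(\xi_1(r),\xi_j(r)\bigr).
$$
Consequently
$$
\widetilde\Psi_-(d,r)=\tfrac{k}{2}d^{N-2}\Lambda_1(r)-kB\log d,\qquad \widetilde\Phi_+(d,r)=\tfrac{k}{2}d^{N-2}\Lambda_1(r)-\tfrac{kB}{2}d^{2}.
$$
In both expressions $\partial_d\widetilde\Psi=0$ admits a (unique) solution $d=d_*(r)$ if and only if $\Lambda_1(r)>0$, while $\partial_r\widetilde\Psi=0$ reduces to $\Lambda_1'(r)=0$. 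The task is thus to find a non-degenerate critical point $r_0\in(\rho,1)$ of the one-variable function $\Lambda_1$ with $\Lambda_1(r_0)>0$: such a point is stable under $C^1$-perturbations and is converted by the reduction into a genuine positive solution.

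The heart of the argument is then the study of $r\mapsto\Lambda_1(r)$ on $(\rho,1)$, carried out through the explicit expansion of the Green function of $\Omega_\rho$ in Gegenbauer polynomials and modified Bessel-type radial kernels. I would prove: (a) $\Lambda_1(r)\to+\infty$ as $r\to\rho^+$ or $r\to 1^-$, since the Robin function $\tau$ blows up near $\partial\Omega_\rho$ while the interaction terms $G(\xi_1(r),\xi_j(r))$ remain bounded; (b) using monotonicity/convexity properties of the Gegenbauer expansion, $\Lambda_1(\cdot;\rho)$ has a unique interior critical point $r_0(\rho)$, which is a strict minimum; (c) the map $\rho\mapsto m_k(\rho):=\Lambda_1(r_0(\rho);\rho)$ is continuous, strictly positive for $\rho$ close to $1$ (thin annulus: $\tau$ dominates everywhere), and strictly negative for $\rho$ close to $0$ (small hole: the cumulative peak-peak interactions overpower the Robin contribution, in the spirit of \cite{DFM2}). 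The threshold $\rho_k:=\inf\{\rho\in(0,1):m_k(\rho)>0\}$ then lies in $(0,1)$.

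For $\rho>\rho_k$, $r_0(\rho)$ is a critical point of $\Lambda_1$ with $\Lambda_1(r_0)>0$, and the reduction produces the family of positive $H_s$-solutions claimed in (i). For (ii), the converse side of the reduction forces any such concentrating family to produce, up to a subsequence, a limit $r_\e\to r_\infty\in(\rho,1)$ with $\Lambda_1'(r_\infty)=0$ and $\Lambda_1(r_\infty)>0$; by the uniqueness in (b) this would mean $r_\infty=r_0(\rho)$ and $m_k(\rho)>0$, contradicting $\rho<\rho_k$. The main technical obstacle lies in steps (b)--(c): establishing uniqueness of the critical point of $\Lambda_1(\cdot;\rho)$ and obtaining the sign asymptotics of $m_k(\rho)$ at the two endpoints $\rho\to 0^+$ and $\rho\to 1^-$, which rests on delicate asymptotic work with the explicit but non-elementary Green function of the annulus — exactly the place where the Gegenbauer-polynomial machinery becomes indispensable.
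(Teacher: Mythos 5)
Your high-level scheme — reduce via a symmetry-constrained Lyapunov--Schmidt argument to the one-variable function $\Lambda_1(r)$, identify it as the lowest eigenvalue of the circulant matrix $M(\boldsymbol\xi)$ along the polygon slice, and locate a threshold $\rho_k$ separating the sign of its minimum — matches the paper's strategy. But what you present as steps (b)--(c) is asserted rather than argued, and that is exactly where the theorem lives. More concretely, your definition $\rho_k:=\inf\{\rho:m_k(\rho)>0\}$ only delivers the nonexistence half: for $\rho<\rho_k$ you get $m_k(\rho)\leq0$ by definition of the infimum, but nothing guarantees $m_k(\rho)>0$ for \emph{every} $\rho>\rho_k$, which is what part (i) requires. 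You need strict monotonicity of $\rho\mapsto m_k(\rho)$ (or of an equivalent quantity), and you offer no mechanism for it.

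The missing mechanism is the key structural observation the paper leans on. Writing $\Lambda_1(r)=r^{-(N-2)}f(r)$ and expanding $f$ through the explicit Grossi--Vujadinovi\'c series, one finds $f(r)=\frac{1}{\omega_{N-1}}\bigl[-\tfrac{\mathfrak C_{k,N}}{N-2}+\sum_{m\geq0}r^{N-2}Q_m(r)\,c_{N,m}\,\alpha_{k,m,N}\bigr]$, where each radial kernel $r^{N-2}Q_m(r)$ is a rational function of $r$ (not a Bessel-type kernel) that is minimized \emph{at the same point} $r=\sqrt\rho$ for every $m$. Once one proves $\alpha_{k,m,N}\geq0$ for all $m$ (and in fact $\alpha_{k,2m,N}\geq k$), which requires separate Gegenbauer-recurrence arguments in $N=3$, $N=4$, $N\geq5$, the minimum of $f$ is simply $f(\sqrt\rho)$, an explicit, manifestly strictly increasing function of $\rho$. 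This single identity simultaneously gives (i) the sign asymptotics at $\rho\to0^+$ and $\rho\to1^-$, (ii) the uniqueness of the zero $\rho_k$, and, together with a direct second-derivative computation showing $\Lambda_1''>0$ at any critical point, (iii) the uniqueness of the critical point of $\Lambda_1$ needed for the nonexistence claim. Without the nonnegativity of the $\alpha_{k,m,N}$ and the coincidence of the per-term minimizers at $\sqrt\rho$, neither the dichotomy around $\rho_k$ nor the uniqueness of the interior critical point follows, so your sketch does not yet close to a proof.
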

\begin{theorem}\label{main2}
Let $k\geq 2$ be fixed. Then there exists $\rho_k\in (0, 1)$  such that:
\begin{itemize}
\item[i)]
 if $\rho\in (0,\rho_k)$, there exists $r_0=r_0(\rho)\in (\rho,1)$, such that for any small $\e$  the problem $(\mathcal P_+^\e)$ in dimension $N\geq 3$ and the problem $(\mathcal {BN}_-^\e)$ in dimension $N\geq 4$ have a positive solution in $H_s$ which concentrates as $\e \to 0 $ at $k$ points $\xi_1(r_0),\ldots \xi_k(r_0)$ defined by
\eqref{punti}. 
\item[ii)] if $\rho \in (\rho_k,1)$, problems $(\mathcal P_+^\e)$ and $(\mathcal{BN}_-^\e)$ have no family of positive solutions in $H_s$ concentrating as $\e\to 0$ at $k$ points of the form \eqref{punti}.  
\end{itemize}
\end{theorem}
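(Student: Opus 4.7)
The plan is to combine a Lyapunov--Schmidt finite-dimensional reduction performed on the symmetric subspace $H_s$ with an explicit analysis of the resulting reduced functional on $\Omega_\rho$, exploiting the Gegenbauer-polynomial representation of its Green function. One looks for a solution of the form
\[
u_\e = \sum_{i=1}^k U_{\delta,\xi_i(r)} + \phi_\e,
\]
where $\delta = \delta(\e)\to 0^+$, $r\in(\rho,1)$ and $\phi_\e$ is a small remainder orthogonal to the kernel of the linearization of \eqref{le} at the bubble sum. The cyclic rotation and reflection invariances built into $H_s$ force all $k$ bubbles to share a common scale $\delta$ and to be centered at the vertices of a regular $k$-gon $\xi_j(r)$ as in \eqref{punti}. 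Following the reduction schemes of \cite{Daprile} for $(\mathcal P_+^\e)$ and of \cite{MuPi1,MS,PRV} for $(\mathcal{BN}_-^\e)$, the problem is reduced to finding critical points of a two-variable functional $J_\e(d,r)$, where $\delta = \e^\alpha d$ for a suitable exponent $\alpha>0$ depending on the problem and dimension.

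By the symmetry of the configuration, the matrix $M(\boldsymbol\xi(r))$ is circulant and admits $(1,\ldots,1)^T$ as its Perron eigenvector, so
\[
\Lambda_1(r) = \tau(\xi_1(r)) - \sum_{j=2}^k G(\xi_1(r),\xi_j(r)).
\]
For $(\mathcal P_+^\e)$ the leading-order reduced functional coincides, up to a positive multiplicative constant, with $\frac{k}{2}\Lambda_1(r) d^{N-2} + B k \log d$ inherited from \eqref{psi+}; for $(\mathcal{BN}_-^\e)$ with $N\geq 5$ it takes the form $\frac{k}{2}\Lambda_1(r)d^{N-2} + \frac{Bk}{2}d^2$, with an extra borderline correction of the form $d^2\log d$ in dimension $N=4$. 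In each case the Euler--Lagrange equation in $d$ can be cast as $\Lambda_1(r)\,d^{p} = -c$ for suitable $p>0$ and $c>0$, so it admits a solution (hence $J_\e$ admits a critical point in the $d$-direction) if and only if $\Lambda_1(r)<0$.

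The core analytic step is therefore the sign analysis of $\Lambda_1(r)$ on $(\rho,1)$. Using the explicit Gegenbauer expansion of $G$ on $\Omega_\rho$, one first shows that $\Lambda_1$ is smooth on $(\rho,1)$ and that $\Lambda_1(r)\to +\infty$ as $r\to \rho^+$ or $r\to 1^-$, because $\tau(\xi_1(r))$ blows up as $\xi_1(r)$ approaches $\partial\Omega_\rho$ while the cross-terms $G(\xi_1(r),\xi_j(r))$ remain bounded there. Hence $\Lambda_1$ attains an interior minimum at some $r_0(\rho)\in(\rho,1)$, and the function $m_k(\rho):=\min_{r\in(\rho,1)}\Lambda_1(r)$ depends continuously and monotonically on $\rho$. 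One then checks that $m_k(\rho)<0$ for $\rho$ close to $0$ (when the hole is small the attractive cross Green-terms dominate the Robin term at a suitable radius $r_0$) and $m_k(\rho)>0$ for $\rho$ close to $1$ (in a thin annulus the Robin term dominates everywhere). Together with the monotonicity, these two regimes determine a unique threshold $\rho_k\in(0,1)$ with $m_k(\rho_k)=0$.

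Given this sign picture, the existence part (i) follows by choosing $r_0=r_0(\rho)$ where $\Lambda_1(r_0)<0$, optimising $J_\e$ in $d$ to obtain a one-variable reduced functional of $r$ with a non-degenerate, hence $C^1$-stable, minimum at $r_0$, and then invoking the standard reduction argument to produce the family $u_\e$ concentrating at $\xi_j(r_0(\rho))$. The non-existence part (ii) is proved by contradiction: any blowing-up family in $H_s$ of positive solutions concentrating at $k$ points of the form \eqref{punti} must, via the same reduction, generate a critical point $(d_0,r_0)$ of the limit reduced functional, and such a critical point requires $\Lambda_1(r_0)<0$; when $\rho\in(\rho_k,1)$ no such $r_0$ exists in $(\rho,1)$, yielding the contradiction. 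The main obstacle is precisely the sign analysis and monotonicity of $\Lambda_1(r)$: although the Gegenbauer expansion renders $G$ and $\tau$ explicit, establishing strict monotonicity of $m_k(\rho)$ and pinning down $\rho_k$ requires a delicate comparison between the diagonal Robin term and the sum of off-diagonal Green terms at the equispaced configuration. The dimension $N=4$ case of $(\mathcal{BN}_-^\e)$ is further complicated by the logarithmic correction in the reduced functional, which modifies the scaling $\delta=\e^\alpha d$ and demands extra care in transferring sign information on $\Lambda_1$ into $C^1$-stable critical points.
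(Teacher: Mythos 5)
Your proposal matches the paper's strategy in all essentials: Lyapunov--Schmidt reduction to a two-variable functional $\mathfrak J_\e(d,r)$ whose leading term is governed by $\Lambda_1(r)$; identification of the circulant structure of $M(\boldsymbol\xi)$ with Perron eigenvector $(1,\ldots,1)^T$; the sign criterion $\Lambda_1<0$ for existence of the required critical point; monotonicity of $\min_{r}\Lambda_1(r)$ in $\rho$ to pin down a unique threshold $\rho_k$; and nonexistence when the sign is wrong.

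Two small corrections to the way you present the reduced-functional step. When $\Lambda_1(r)<0$ the critical point of $\Psi(d,r)$ is a saddle — a strict maximum in $d$ and a strict minimum in $r$ — so ``optimizing $J_\e$ in $d$ to obtain a one-variable functional with a minimum at $r_0$'' is fine in spirit but the elimination in $d$ is a maximization, and the paper in fact avoids eliminating $d$ altogether: it runs a direct two-variable degree argument on a rectangle $\mathcal R=(\mu,\mu^{-1})\times(a_k,b_k)$ with $\Lambda_1<0$ on $(a_k,b_k)$, using the homotopy $H_t=t\nabla\Psi+(1-t)\left(-(d-d_1),\,r-r_1\right)$ and the boundary estimates \eqref{rel}--\eqref{rel1} to conclude $\deg(\nabla\Psi,\mathcal R,0)=-1$ and hence the same for $\nabla\mathfrak J_\e$. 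Second, for $(\mathcal{BN}_-^\e)$ in dimension $N=4$ the concentration rate is exponential, $\delta\sim e^{\Lambda_1(r_0)/\e}$ (Remark \ref{n34}), not a power scaling $\delta=\e^\alpha d$ with a $d^2\log d$ correction as you suggest.

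Where your account is thinnest is exactly where the paper's work lies: the quantitative sign and monotonicity analysis of $\Lambda_1$, which you correctly flag as ``the main obstacle'' but leave as an assertion. The paper's engine consists of: the cosine-sum identity \eqref{sommatoria}; the Gegenbauer recursion \eqref{recursiveN5} and its inductive consequence $\alpha_{k,2m,N}\geq k$ and $\alpha_{k,2m+1,N}\geq 0$ (Sections \ref{N3subsec}--\ref{N5subsec}); the observation that $r^{N-2}Q_m(r)$ attains its minimum exactly at $r=\sqrt\rho$, which gives the closed-form expression \eqref{h1} for $h(\rho)=\min_r f(r)$; the Eulerian-polynomial bound \eqref{negativo} giving $\min f<0$ for $\rho\to 0^+$; and the strict monotonicity of $h$, immediate from \eqref{h1} together with $\alpha_{k,m,N}\geq 0$, which yields uniqueness of $\rho_k$ (Proposition \ref{unicozero}). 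The reduction and degree arguments you outline are the routine part; this Gegenbauer/Eulerian machinery is what makes the theorem work.
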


\begin{remark}\label{remark1}
A precise characterization of $\rho_k$ is given in Section \ref{proof}. In particular, we prove that $\rho_k\to 1^-$ as $k\to +\infty$ (see Remark \ref{soglia}). This means that for problems $(\mathcal P_-^\e)$ and $(\mathcal{BN}_+^\e)$, when the number of peaks is very large, the solution of Theorem \ref{main1} exists only on thin annuli. On the contrary,  for problems $(\mathcal P_+^\e)$ and $(\mathcal{BN}_-^\e)$ the existence of such solution is guaranteed when the hole of the annulus is small. We stress that in both cases the concentration is inside the annulus and not on the boundary.
\end{remark}

\begin{remark}\label{remark2}
The result in Theorem \ref{main1} is known for problem $(\mathcal{BN}_+^\e)$ in dimension $N=3$ (see \cite{MS}). Here we use a different method in order to prove it and, in addition, we provide more precise quantitative estimates for $\rho_k$. \\ For problem $(\mathcal{BN}_+^\e)$ in dimension $N=4$, Theorem \ref{main1} together with Remark \ref{soglia} represents an answer to the conjecture given in \cite{PRV} (see (4.41) in \cite{PRV}).\\ Finally, Theorem \ref{main1} and Theorem \ref{main2} generalize some known results in literature. For instance, in \cite{MuPi1} only the double blow-up for problem $(\mathcal{BN}_-^\e)$ when $N\geq 5$ is considered in a general bounded domain with a small hole. Moreover, in \cite{Daprile} it is considered only the case with $k$ odd for problem $(\mathcal P_+^\e)$ for technical reasons. Here we do not have these restrictions. \end{remark}

As we will see in the next sections, the difficult part in order to prove Theorems \ref{main1} and \ref{main2} is to show that  the first eigenvalue of the matrix $M(\boldsymbol\xi)$ has the good sign. To do so we will make use of the explicit form of the Green function and of the Robin function on the annulus given in \cite{GV}. We will also exploit the properties of the zonal harmonics and of the Gegenbauer polynomials that are involved in the definition. \\\\ 
The paper  is organized in the following way: in Section \ref{preliminari} we will give some sketch of the reduction scheme and we discuss also the properties of the matrix $M(\boldsymbol\xi)$ in the annulus. In Section \ref{segno} we study the sign of the first eigenvalue of the matrix $M(\boldsymbol\xi)$. In Section \ref{proof} we prove Theorems \ref{main1} and \ref{main2} and we will discuss the properties of the minimum value of the first eigenvalue of the matrix $M(\boldsymbol\xi)$ (see Proposition \ref{main3} and Proposition \ref{unicozero}).

\section{Preliminaries}\label{preliminari}
\subsection{The reduction scheme}\label{riduzione}
In this section we outline the main steps of the reduction procedure in order to deduce the functions whose stable critical points are responsible for the existence of solutions of the problems $(\mathcal P_\pm^\e)$ and $(\mathcal{BN}_\pm^\e)$ respectively in dimension $N\geq3$ and $N\geq 5$.\\For $(\mathcal{BN}_+^\e)$ with  $N=3$ and $N=4$ the results for the annulus are known in the literature. We only outline them in Section \ref{cp}, where we also discuss $(\mathcal{BN}_-^\e)$ in dimension $N=4$ (see Remark \ref{n34}).\\

\noindent As mentioned in the introduction, for all $r\in (\rho,1)$, we will consider a symmetric configuration of points $\xi_1(r),\ldots \xi_k(r)$ of the form \eqref{punti}.
Let us also introduce $PU_{\delta,\xi}$ the projection onto $H^1_0(\Omega_\rho)$ of the bubble $U_{\delta, \xi}$ defined in \eqref{bubble}, namely the solution of the Dirichlet problem
\beq\label{proj}
\left\{\begin{aligned} &-\Delta  PU_{\delta,\xi}=U_{\delta,\xi}^{2^*-1}\quad &\mbox{in}\,\, \Omega_\rho\, \\
&PU_{\delta,\xi}=0\quad &\mbox{on}\,\, \partial\Omega_\rho.\end{aligned}\right.\eeq
We recall the following relation (see \cite{Rey2}, Lemma 2).
\begin{lemma}\label{HGU}
As $\d\to 0$ 
\beq\label{HU}
PU_{\d, \xi}(x)=U_{\d, \xi}(x)-\mathfrak C\d^{\frac{N-2}{2}}H(x, \xi)+\mathcal O(\d^{\frac{N+2}{2}}),\quad x\in\Omega_\rho, \eeq $C^1-$ uniformly on compact sets of $\Omega_\rho $ and
\beq\label{GU}
PU_{\d, \xi}(x)=\mathfrak C\d^{\frac{N-2}{2}}G(x, \xi)+\mathcal O(\d^{\frac{N+2}{2}}),\quad x\in\Omega_\rho, \eeq $C^1-$ uniformly on compact sets of $\bar\Omega_{\rho}\setminus\{\xi\}$ where $\mathfrak C=(N-2)\alpha_N \omega_{N-1}$.\end{lemma}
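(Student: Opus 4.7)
The plan is to study the harmonic correction
\[
\vfi_{\d,\xi}(x):=U_{\d,\xi}(x)-PU_{\d,\xi}(x),
\]
which by \eqref{proj} satisfies $-\Delta\vfi_{\d,\xi}=0$ in $\Omega_\rho$ and $\vfi_{\d,\xi}=U_{\d,\xi}$ on $\de\Omega_\rho$, and to compare it with $\mathfrak C\d^{\frac{N-2}{2}}H(\cdot,\xi)$. The key cancellation making everything work is the choice of the constant $\mathfrak C=(N-2)\alpha_N\omega_{N-1}$: this is precisely the value that equates the leading boundary behaviours of the two harmonic functions $\vfi_{\d,\xi}$ and $\mathfrak C\d^{\frac{N-2}{2}}H(\cdot,\xi)$.

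First I would Taylor-expand the bubble \eqref{bubble} on $\de\Omega_\rho$: since $|x-\xi|\geq c>0$ uniformly for $x\in\de\Omega_\rho$, one obtains
\[
U_{\d,\xi}(x)=\alpha_N\d^{\frac{N-2}{2}}\frac{1}{|x-\xi|^{N-2}}+\mathcal O\bigl(\d^{\frac{N+2}{2}}\bigr)\quad\text{on }\de\Omega_\rho,
\]
together with an analogous expansion for $\n U_{\d,\xi}$. By \eqref{H} the function $H(\cdot,\xi)$ is harmonic in $\Omega_\rho$ and equals $[\omega_{N-1}(N-2)]^{-1}|x-\xi|^{-(N-2)}$ on $\de\Omega_\rho$ (since $G$ vanishes there), so with the above choice of $\mathfrak C$ the difference $\psi(x):=\vfi_{\d,\xi}(x)-\mathfrak C\d^{\frac{N-2}{2}}H(x,\xi)$ satisfies $\|\psi\|_{L^\infty(\de\Omega_\rho)}=\mathcal O(\d^{\frac{N+2}{2}})$. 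Since $\psi$ is harmonic in $\Omega_\rho$, the maximum principle upgrades this to $\|\psi\|_{L^\infty(\Omega_\rho)}=\mathcal O(\d^{\frac{N+2}{2}})$, which is \eqref{HU} at the $L^\infty$ level.

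To promote the estimate to $C^1$-uniformity on a compact set $K\subset\Omega_\rho$, I would invoke standard interior derivative estimates for harmonic functions, giving $\|\n\psi\|_{L^\infty(K)}\leq C(K)\|\psi\|_{L^\infty(\Omega_\rho)}=\mathcal O(\d^{\frac{N+2}{2}})$. The second expansion \eqref{GU} then follows directly from \eqref{HU} by using the identity $H(x,\xi)=[\omega_{N-1}(N-2)]^{-1}|x-\xi|^{-(N-2)}-G(x,\xi)$: on a compact subset of $\bar\Omega_\rho\setminus\{\xi\}$, the same Taylor expansion used above yields $U_{\d,\xi}(x)=\alpha_N\d^{\frac{N-2}{2}}|x-\xi|^{-(N-2)}+\mathcal O(\d^{\frac{N+2}{2}})$ in $C^1$, the singular terms cancel again through the choice of $\mathfrak C$, and what remains is $\mathfrak C\d^{\frac{N-2}{2}}G(x,\xi)$ plus the same order of error.

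The only moderately delicate point is the $C^1$ upgrade; however, since the remainder $\psi$ is a genuinely harmonic function bounded in $L^\infty$, its gradient is controlled on compact subsets by the classical mean-value property, so no further work is required. Overall this is a standard computation; in fact the lemma is essentially the expansion of Rey \cite{Rey2} that the authors explicitly cite.
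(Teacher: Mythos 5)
The paper does not prove this lemma at all: it is stated with the citation ``(see \cite{Rey2}, Lemma 2)'' and no proof is given, so there is no internal argument for you to match. Your reconstruction --- write $\vfi_{\d,\xi}:=U_{\d,\xi}-PU_{\d,\xi}$, note that it is harmonic with boundary data $U_{\d,\xi}$, identify the leading boundary behaviour with $\mathfrak C\d^{\frac{N-2}{2}}H(\cdot,\xi)$ after checking that $\mathfrak C\cdot[\omega_{N-1}(N-2)]^{-1}=\alpha_N$, and then control the harmonic remainder $\psi$ by the maximum principle --- is exactly the standard proof and is the one in Rey's paper, so the strategy is sound.

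There is, however, one genuine gap in your $C^1$ step. Interior gradient estimates (or the mean-value property) control $\|\nabla\psi\|_{L^\infty(K)}$ only for compacts $K$ \emph{strictly inside} $\Omega_\rho$, which is all that \eqref{HU} needs. But \eqref{GU} is asserted $C^1$-uniformly on compacts of $\bar\Omega_\rho\setminus\{\xi\}$, which may touch $\de\Omega_\rho$; there the interior estimate gives nothing, and your derivation of \eqref{GU} by ``substituting the expansion of $U$ into \eqref{HU}'' silently uses the bound on $\psi$ up to the boundary. To close this you need a boundary estimate: observe that the boundary data $\psi|_{\de\Omega_\rho}=U_{\d,\xi}-\alpha_N\d^{\frac{N-2}{2}}|\cdot-\xi|^{-(N-2)}$ is $O(\d^{\frac{N+2}{2}})$ not merely in $L^\infty(\de\Omega_\rho)$ but in $C^{1,\alpha}(\de\Omega_\rho)$ (the Taylor remainder of $U_{\d,\xi}$ is smooth in $x$ uniformly for $|x-\xi|\geq c$), and then invoke global Schauder estimates for harmonic functions on the smooth domain $\Omega_\rho$ (or, on the annulus, the explicit Poisson-kernel representation) to conclude $\|\psi\|_{C^{1}(\bar\Omega_\rho)}=O(\d^{\frac{N+2}{2}})$. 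With that fix the argument is complete.
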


We look for a solution of the problems $(\mathcal P_\pm^\e)$ and $(\mathcal {BN}^\e_\pm)$ of the form
\beq\label{sol}
u_\e(x):=\sum_{j=1}^k PU_{\delta,\xi_j}+\phi_\e
\eeq
where the concentration parameters $\d:=\d(\e)\to 0$ as $\e\to 0$ are chosen in the following way
\beq\label{cp1}
\delta:=\left\{\begin{aligned} & \e^{\frac{1}{N-2}}d\quad &\mbox{for problems}\,\, (\mathcal P_\pm^\e)\,\, \mbox{and}\,\, N\geq 3\\
& \e^{\frac{1}{N-4}}d\quad &\mbox{for problems}\,\, (\mathcal{BN}_\pm^\e)\,\, \mbox{and}\,\, N\geq 5
\end{aligned}\right.\eeq	 
where $d\in (a, b)$ for some $0<a<b$ and $\phi_\e\in H_s$ is a remainder term chosen appropriately (see \eqref{kbot}). 
To carry out the construction of a solution of this type, we first introduce an intermediate problem as follows.\\ We let
\beq\label{nucleo}
Z_{j, 1}:=\frac{\partial P U_{\delta, \xi_j}}{\partial\delta},\quad Z_{j, 2}:=\frac{\partial PU_{\delta, \xi_j}}{\partial r}, \quad j=1, \ldots, k.\eeq
We also let $$\mathcal K_\e:={\rm span}\left\{ Z_{j, 1}, Z_{j, 2}\,:\, j=1, \ldots, k\right\}$$ and \beq\label{kbot}\mathcal K_\e^\bot:=\left\{\phi\in H_s\,\,:\,\,\, (\phi, Z_{j, i})_{H^1_0(\Omega_\rho)}=0\,\, j=1, \ldots, k,\,\, i=1, 2\right\}.\eeq As usual the first step consists in looking for $\phi_\e$ in $\mathcal K_\e^\bot$ by using a Banach contraction argument and then solving the reduced problem looking for $d$ and $r$.\\
In what follows we let $V_\e:=\sum_{j=1}^k PU_{\delta,\xi_j}$ and $$f_{\pm \e}(s):=\left\{\begin{aligned} & s^{2^*-1\pm \e}\quad&\mbox{for problems}\,\, (\mathcal P_\pm^\e)\\
&s^{2^*-1}\pm \e s \quad&\mbox{for problems}\,\, (\mathcal{BN}_\pm^\e).\end{aligned}\right.$$
By using the results in \cite{DFM1, BMP, Rey3, BLR, PRV, MuPi}, it is possible to prove the following result.
\begin{proposition}\label{esistenzaphi}
Let $0<a<b$. There exists $\e_0>0$ and a constant $C>0$ such that for each $\e\in(0, \e_0)$ and each $(d, r)\in (a, b)\times(\rho, 1)$ there exists a unique $\phi_\e\in\mathcal K_\e^\bot$ satisfying
$$\Delta (V_\e+\phi_\e)+f_{\pm\e}(V_\e+\phi_\e)\in\mathcal K_\e$$ and $$\|\phi_\e\|_{H^1_0(\Omega_\rho)}\lesssim \left\{\begin{aligned}&\e\quad&\mbox{for problems}\,\, (\mathcal{P}_\pm^\e)\\
&\e^{\frac{N+2}{2(N-4)}}\quad&\mbox{for problems}\,\, (\mathcal{BN}_\pm^\e)\,\,\mbox{and}\,\, N\geq 7\\
&\e^{2}|\ln\e|\quad&\mbox{for problems}\,\, (\mathcal{BN}_\pm^\e)\,\,\mbox{and}\,\, N= 6\\
&\e^{\frac{5}{2}}\quad&\mbox{for problems}\,\, (\mathcal{BN}_\pm^\e)\,\,\mbox{and}\,\, N=5.\\\end{aligned}\right. $$ Moreover the map $(d, r)\in (a, b)\times (\rho, 1) \mapsto \phi_\e :=\phi_\e(d, r)\in H^1_0(\Omega_\rho)$ is $C^1$.
\end{proposition}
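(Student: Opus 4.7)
The plan is a by-now standard Lyapunov--Schmidt finite-dimensional reduction carried out in the symmetric class $H_s$, along the lines of the references \cite{BMP, BLR, Rey3, DFM1, PRV, MuPi} invoked in the statement. I would write $u=V_\e+\phi$, so that $u$ solves $-\Delta u = f_{\pm\e}(u)$ in $\Omega_\rho$ if and only if
\[
L_\e \phi \;=\; N_\e(\phi) + R_\e,
\]
where $L_\e := -\Delta - f_{\pm\e}'(V_\e)$, the error $R_\e := \Delta V_\e + f_{\pm\e}(V_\e)$ measures how far $V_\e$ is from being a true solution, and $N_\e(\phi) := f_{\pm\e}(V_\e+\phi)-f_{\pm\e}(V_\e)-f_{\pm\e}'(V_\e)\phi$ collects the superlinear remainder. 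Composing with the Riesz isomorphism and with the $H_0^1$-orthogonal projection onto $\mathcal K_\e^\bot$, the projected equation becomes a fixed-point problem $\phi = T_\e(\phi)$ on $\mathcal K_\e^\bot$.

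The first main step is the uniform invertibility of the projected $L_\e$ on $\mathcal K_\e^\bot$. By a classical rescaling/contradiction argument, one reduces the question to identifying the elements of the kernel of $-\Delta - (2^*-1)U_{1,0}^{2^*-2}$ on $\mathbb R^N$ that survive the $\mathbb Z_k$-rotation in the $(x_1,x_2)$-plane and the $O(N-2)$-action in the remaining variables characterizing $H_s$: these are exactly the dilation derivative and the radial-translation derivative, i.e.\ the modes spanned by $Z_{j,1},Z_{j,2}$ that have been quotiented out. This yields an a priori estimate
\[
\|\phi\|_{H^1_0(\Omega_\rho)}\;\lesssim\; \|L_\e \phi\|_{H^{-1}(\Omega_\rho)}\qquad \forall\,\phi\in \mathcal K_\e^\bot,
\]
uniformly in $\e$ and in $(d,r)$ varying in a fixed compact set of $(a,b)\times(\rho,1)$. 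The lower-order linear perturbation $\pm\e u$ appearing in $(\mathcal{BN}_\pm^\e)$ is absorbed immediately for small $\e$.

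The second main step is the quantitative estimate of $R_\e$, which fixes the size of $\phi_\e$. Using Lemma \ref{HGU} and the standard bubble-interaction computations one gets
\[
\|R_\e\|_{H^{-1}(\Omega_\rho)} \;\lesssim\;
\begin{cases}
\e, & (\mathcal P_\pm^\e),\ N\geq 3,\\
\e^{\frac{N+2}{2(N-4)}}, & (\mathcal{BN}_\pm^\e),\ N\geq 7,\\
\e^{2}|\ln\e|, & (\mathcal{BN}_\pm^\e),\ N=6,\\
\e^{5/2}, & (\mathcal{BN}_\pm^\e),\ N=5,
\end{cases}
\]
where the dimension dependence in the Brezis--Nirenberg case reflects the $L^p$-integrability of the bubble $U_{\delta,\xi_j}$ against the linear perturbation $\pm \e u$ (in particular the logarithmic loss in $N=6$ comes from the borderline integrability of $U^2$). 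A Taylor expansion of $f_{\pm\e}$ combined with the Sobolev embedding gives
\[
\|N_\e(\phi_1)-N_\e(\phi_2)\|_{H^{-1}} \;\lesssim\; \bigl(\|\phi_1\|_{H^1_0}+\|\phi_2\|_{H^1_0}\bigr)^{\min(1,\,2^*-2)}\|\phi_1-\phi_2\|_{H^1_0},
\]
so $T_\e$ is a contraction on a ball of $\mathcal K_\e^\bot$ of radius a fixed multiple of $\|R_\e\|_{H^{-1}}$. Banach's fixed-point theorem produces the unique $\phi_\e\in \mathcal K_\e^\bot$ with the claimed bound. The $C^1$ dependence on $(d,r)$ then follows by differentiating the fixed-point equation and applying the implicit function theorem, relying once more on the uniform invertibility of $L_\e$.

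The main obstacle in this proposition is precisely the uniform invertibility of $L_\e\big|_{\mathcal K_\e^\bot}$ in the symmetric space $H_s$: one has to check that any approximate small-eigenvalue eigenfunction of the limit linearized operator, after blow-up and passage to the limit, lies in the span of the $Z_{j,i}$, so that no spurious resonance can occur uniformly in $(d,r)$. Once non-degeneracy is in place, the rest of the argument is a direct transcription of the cited references, with the only care being the bookkeeping of the dimension-dependent rates in the Brezis--Nirenberg case.
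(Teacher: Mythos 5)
Your proposal is the standard Lyapunov--Schmidt reduction (projected linearized operator, uniform invertibility on $\mathcal K_\e^\bot$, error estimate for $R_\e$, contraction mapping, $C^1$ dependence via the implicit function theorem), which is exactly the scheme the paper invokes: the paper does not prove Proposition \ref{esistenzaphi} but defers to the cited references \cite{DFM1, BMP, Rey3, BLR, PRV, MuPi}, and your sketch faithfully reproduces their approach, including the dimension-dependent rates for $(\mathcal{BN}_\pm^\e)$. One minor point: for $N\geq 7$ the rate $\e^{\frac{N+2}{2(N-4)}}$ is governed by the $\mathcal O(\delta^{\frac{N+2}{2}})$ remainder in the projection expansion of Lemma \ref{HGU} (with $\delta\sim\e^{1/(N-4)}$), rather than by the $L^{2N/(N+2)}$-norm of $\e U_{\delta,\xi}$ which is of smaller order $\e^{\frac{N-2}{N-4}}$, so the attribution of the dominant error should be refined there, but this does not affect the validity of the scheme.
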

\noindent Now, let us consider the following energy functional associated to problems $(\mathcal P_\pm^\e)$ and $(\mathcal{BN}_\pm^\e)$ 
\beq\label{funz}
J_\e(u):=\frac 12 \int_{\Omega_\rho} |\nabla u|^2\, dx -\int_{\Omega_\rho} F_{\pm \e}(u)\, dx, \quad u\in H_s, \eeq
where $F_{\pm \e}(s):=\int_0^s f_{\pm \e}(t)\, dt$.\\ Solutions of problems $(\mathcal P_\pm^\e)$ and $(\mathcal{BN}_\pm^\e)$  are critical points of $J_\e$.\\Finally, we introduce the reduced functional
\begin{equation}\label{JFrak}
\mathfrak J_\e (d, r):= J_\e (V_\e+\phi_\e),\quad (d, r)\in (a, b)\times (\rho, 1)
\end{equation}
where $\phi_\e$ is the function of Proposition \ref{esistenzaphi}. \\ Then it is possible to show the following Lemma (see \cite{DFM2}).
\begin{lemma}\label{lemma1}
The pair $(d, r)\in (a, b)\times (\rho, 1)$ is a critical point of $\mathfrak J_\e$ if and only if the corresponding function $u_\e=V_\e+\phi_\e$ is a critical point of $J_\e$.\end{lemma}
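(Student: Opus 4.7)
The approach is a routine Lyapunov--Schmidt verification, exploiting the fact that the pair $(d,r)$ parameterizes exactly the free directions that were frozen when constructing $\phi_\e$. The ``only if'' implication is immediate: if $u_\e=V_\e+\phi_\e$ is a critical point of $J_\e$, then, since by Proposition \ref{esistenzaphi} both $V_\e$ and $\phi_\e$ depend in a $C^1$ manner on $(d,r)$, the chain rule yields $\partial_d \mathfrak J_\e=\partial_r \mathfrak J_\e=0$ at that pair.

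For the ``if'' direction, reformulate Proposition \ref{esistenzaphi} as saying there exist Lagrange multipliers $c_{j,i}=c_{j,i}(d,r,\e)$ such that
\beq\label{Jprime}
J_\e'(V_\e+\phi_\e)=\sum_{j=1}^k\sum_{i=1}^2 c_{j,i}\,Z_{j,i}\quad\text{in } H^{-1}(\Omega_\rho).
\eeq
The goal is to show that the hypothesis $\nabla_{(d,r)}\mathfrak J_\e=0$ forces $c_{j,i}\equiv 0$. By the equivariance of the construction under the symmetry group of $H_s$ (the cyclic rotation by $2\pi/k$ permutes the $\xi_j(r)$, hence the $Z_{j,i}$, and $u_\e$ is invariant), one deduces that $c_{j,i}$ is independent of $j$, so only two unknowns $c_1,c_2$ survive. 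Differentiating $\mathfrak J_\e(d,r)=J_\e(V_\e+\phi_\e)$ in $d$ and $r$, applying \eqref{Jprime}, and using that by \eqref{cp1} and \eqref{nucleo} one has $\partial_d V_\e=\e^{\alpha}\sum_j Z_{j,1}$ (with $\alpha=\frac{1}{N-2}$ in the almost-critical case and $\alpha=\frac{1}{N-4}$ in the Brezis--Nirenberg case) and $\partial_r V_\e=\sum_j Z_{j,2}$, the criticality of $(d,r)$ becomes a $2\times 2$ homogeneous linear system $\mathbb A_\e(c_1,c_2)^T=\mathbf 0$.

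The heart of the proof is to show that $\mathbb A_\e$ is invertible for $\e$ small. Its leading order is diagonal, with entries that reduce (up to non-zero constants and the $\e$-scaling above) to $\|Z_{1,1}\|_{H^1_0}^2$ and $\|Z_{1,2}\|_{H^1_0}^2$, both strictly positive and bounded away from zero uniformly for $(d,r)\in(a,b)\times(\rho,1)$. The off-diagonal contributions come from cross inner products $(Z_{j,i},Z_{l,m})$ with $j\neq l$, which by standard bubble-interaction estimates are of order $\delta^{N-2}|\xi_j-\xi_l|^{-(N-2)}$, and from a remainder produced by $\partial_d\phi_\e$ and $\partial_r\phi_\e$: these are handled through the identities $(\partial_d\phi_\e,Z_{j,i})=-(\phi_\e,\partial_d Z_{j,i})$ and its $r$-analogue, combined with the smallness of $\|\phi_\e\|_{H^1_0}$ from Proposition \ref{esistenzaphi}. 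All such terms are of strictly lower order than the diagonal as $\e\to 0$, so $\mathbb A_\e$ is invertible and forces $c_1=c_2=0$. Through \eqref{Jprime} this gives $J_\e'(u_\e)=0$.

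The main technical point is precisely this orthogonality bookkeeping: ensuring that the off-diagonal and remainder contributions to $\mathbb A_\e$ are strictly smaller than the diagonal ones in an $\e$-uniform way. Since entirely analogous reductions have been implemented in \cite{DFM2, BLR, PRV}, one can import those estimates with only cosmetic modifications to accommodate simultaneously the slightly sub/super-critical power and the Brezis--Nirenberg linear perturbation.
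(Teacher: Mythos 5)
The paper does not prove this lemma; it simply refers the reader to \cite{DFM2}, and your proposal correctly reconstructs the standard Lyapunov--Schmidt argument used there and in the related references (\cite{BLR, PRV}): the chain rule gives one direction, and the other follows from the nondegeneracy of the (symmetry-reduced) $2\times2$ linear system for the Lagrange multipliers, obtained via asymptotic diagonal dominance. Two small inaccuracies, neither affecting the substance: you have the ``if''/``only if'' labels swapped (the chain-rule direction is ``$u_\e$ critical $\Rightarrow (d,r)$ critical'', which is the ``if'' half; the Lagrange-multiplier argument proves the ``only if'' half), and the diagonal entries $\|Z_{1,i}\|_{H^1_0}^2$ are not uniformly bounded as $\e\to 0$ but rather grow like $\delta^{-2}$ --- what matters, and what your comparison with the off-diagonal and remainder contributions correctly establishes, is that they dominate the other entries.
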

Finally we describe the expansion for $\mathfrak J_\e$ which can be obtained as in \cite{DFM2, DFM3, Daprile, BMP, PRV, MuPi}. For simplicity we let 
\beq\label{lambdar}
\Lambda(r):= \sum_{j=1}^k \tau_\rho(\xi_j(r))-\sum_{i\neq j} G_\rho(\xi_i(r), \xi_j(r)).\eeq
\begin{proposition}\label{exp}
As $\e\to 0$, the following asymptotic expansion holds for problems $(\mathcal P_\pm^\e)$ for $N\geq 3$
\beq\label{ppm}
\mathfrak J_\e(d, r)=\mathfrak c_0+ \e \left(\mathfrak c_1 d^{N-2}\Lambda(r)\pm\mathfrak c_2 \log d\right)+o\left(\e\right)
\eeq
while, for problems $(\mathcal{BN}_\pm^\e)$ in dimensions $N\geq 5$ it holds
\beq\label{bnpm5}
\mathfrak J_\e(d, r)= \mathfrak d_0 +\e^{{\frac{N-2}{N-4}}}\left(\mathfrak d_1 d^{N-2}\Lambda(r)\mp\mathfrak d_2 d^2\right)+o\left(\e^{{\frac{N-2}{N-4}}}\right)
\eeq
where $\mathfrak c_i$ and $\mathfrak d_i$ in \eqref{ppm} and \eqref{bnpm5} are some known positive constants. The previous estimates hold $C^1-$ uniformly with respect to $(d, r)\in (a, b)\times (\rho, 1)$.
\end{proposition}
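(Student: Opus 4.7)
The plan is to start from the definition $\mathfrak{J}_\e(d,r) = J_\e(V_\e + \phi_\e)$ and carry out a standard two-step expansion: first absorb the correction $\phi_\e$, then expand $J_\e(V_\e)$ explicitly. Taylor-expanding around $V_\e$ gives
\[
J_\e(V_\e+\phi_\e) = J_\e(V_\e) + \langle J_\e'(V_\e), \phi_\e\rangle + \tfrac12\langle J_\e''(V_\e)[\phi_\e], \phi_\e\rangle + O(\|\phi_\e\|^3).
\]
Since $\phi_\e\in\mathcal{K}_\e^\perp$ solves the projected equation of Proposition \ref{esistenzaphi}, the linear term is bounded by $\|J_\e'(V_\e)\|_{\ast}\|\phi_\e\|$; combining this with the quadratic and cubic remainders and the sharp size bounds on $\|\phi_\e\|$ from that same proposition, the whole $\phi_\e$-contribution is $o(\e)$ for $(\mathcal{P}_\pm^\e)$ and $o(\e^{(N-2)/(N-4)})$ for $(\mathcal{BN}_\pm^\e)$. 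The $C^1$-uniformity is then inherited from the $C^1$-dependence of $\phi_\e$ on $(d,r)$ stated in Proposition \ref{esistenzaphi}.

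The core task is the expansion of $J_\e(V_\e)$. Split $J_\e = \mathcal{E}_0 + \mathcal{R}^\e_\pm$, where $\mathcal{E}_0(u) = \frac12\int|\nabla u|^2 - \frac{1}{2^*}\int u^{2^*}$ is the critical energy. For $\mathcal{E}_0(V_\e)$ expand the cross terms using $\int \nabla PU_{\delta,\xi_i}\cdot\nabla PU_{\delta,\xi_j} = \int PU_{\delta,\xi_i}\,U_{\delta,\xi_j}^{2^*-1}$ together with Lemma \ref{HGU} and the concentration identity
\[
\int U_{\delta,\xi}^{2^*-1}\,\varphi(x)\,dx = c_N \delta^{(N-2)/2}\varphi(\xi) + o(\delta^{(N-2)/2}), \qquad \varphi \in C(\overline{\Omega_\rho}).
\]
The diagonal terms contribute the Robin function $\tau_\rho(\xi_j)$ via \eqref{HU}, while the off-diagonal ones contribute $G_\rho(\xi_i,\xi_j)$ via \eqref{GU}. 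Combining with the analogous expansion of $\int V_\e^{2^*}$ (which cancels the leading $kS_0$-pieces) reorganizes the subleading contribution into $\delta^{N-2}\Lambda(r)$, yielding $\mathcal{E}_0(V_\e) = k \mathfrak{b}_0 + \mathfrak{b}_1 \delta^{N-2}\Lambda(r) + o(\delta^{N-2})$ for explicit positive constants $\mathfrak{b}_0,\mathfrak{b}_1$.

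The sign of the perturbation $\mathcal{R}^\e_\pm$ dictates the form of the final expansion. For $(\mathcal{P}_\pm^\e)$, Taylor-expand $s^{2^*\pm\e} = s^{2^*}(1\pm\e\log s + O(\e^2\log^2 s))$ and compute $\int V_\e^{2^*}\log V_\e$ by rescaling $x = \xi_j + \delta y$ around each bubble: the leading contribution is $-\frac{N-2}{2}(\log\delta)\cdot k \int U_{1,0}^{2^*}$. Substituting $\delta = \e^{1/(N-2)} d$ converts $\delta^{N-2}$ into $\e\,d^{N-2}$ and $\log\delta$ into $\log d + \frac{1}{N-2}\log\e$; the $\log\e$ piece is independent of $(d,r)$ and gets absorbed into $\mathfrak{c}_0$, while $\pm \mathfrak{c}_2 \log d$ emerges at order $\e$, producing \eqref{ppm}. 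For $(\mathcal{BN}_\pm^\e)$ with $N\geq 5$, the perturbation is $\mp\frac{\e}{2}\int V_\e^2$; since $U_{1,0}^2 \in L^1(\mathbb{R}^N)$ when $N>4$, one has $\int V_\e^2 = k\delta^2 \int U_{1,0}^2 + o(\delta^2)$. With $\delta = \e^{1/(N-4)}d$, both $\delta^{N-2}$ and $\e\delta^2$ share the order $\e^{(N-2)/(N-4)}$, producing \eqref{bnpm5}.

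The main obstacle is the $C^1$-uniformity in $(d,r)$. Differentiating bubble integrals such as $\int U_{\delta,\xi_j}^{2^*-1}\varphi$ with respect to $d$ or $r$ produces negative powers of $\delta$, so one must check that every leading-order cancellation survives differentiation. This relies on sharp pointwise and $C^1$-estimates for the remainders in Lemma \ref{HGU} together with the derivative estimates for $\phi_\e(d,r)$ coming from the implicit-function argument in Proposition \ref{esistenzaphi}; the underlying algebra is essentially the one already carried out in the quoted references \cite{DFM2, DFM3, Daprile, BMP, PRV, MuPi}.
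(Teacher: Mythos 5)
The paper does not prove Proposition \ref{exp} directly; it simply cites \cite{DFM2, DFM3, Daprile, BMP, PRV, MuPi} for the expansion. Your sketch reproduces exactly the standard reduction--expansion argument from those references — absorb $\phi_\e$ via the size estimates of Proposition \ref{esistenzaphi}, expand $\mathcal E_0(V_\e)$ through Lemma \ref{HGU} to extract $\delta^{N-2}\Lambda(r)$, expand the $\pm\e$ perturbation (logarithmic for $(\mathcal P_\pm^\e)$, quadratic for $(\mathcal{BN}_\pm^\e)$ with $N\geq 5$), and then substitute the scaling \eqref{cp1} — so it is correct and matches the paper's intended route; the only cosmetic caveat is that for $N\geq 7$ the Taylor remainder should be $O(\|\phi_\e\|^{2^*})$ rather than $O(\|\phi_\e\|^3)$, which does not affect the conclusion.
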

%
%
Thus in order to construct a solution of problems $(\mathcal P_\pm^\e)$ and $(\mathcal{BN}_\pm^\e)$ such as the one predicted in Theorems \ref{main1}, \ref{main2} it remains
to find a critical point of $\mathfrak J_\e$. This will be accomplished in the Section \ref{cp}.\\\\

\subsection{About the function $\Lambda(r)$}\label{lambdarp}
First we recall some known results about the Green function and the Robin function on the annulus. Indeed, their  explicit expression is given in \cite{GV}, namely
\begin{equation*}
\tau_\rho(x):=\frac{1}{\omega_{N-1}}\sum_{m=0}^{\infty}d_m Q_m(|x|),
\end{equation*}
where, for all $m \geq 1$,  
\begin{equation*}
d_{m} = \binom{m+N-2}{N-2} + \binom{m+N-3}{N-2}
\end{equation*}
and
\begin{equation}\label{Qm}
Q_m(|x|):=\frac{\rho^{2m+N-2} - 2\rho^{2m+N-2}|x|^{2m+N-2} + |x|^{4m+2N-4}}{(2m+N-2)|x|^{2N+2m-4}(1-\rho^{2m+N-2})}.
\end{equation}
Instead, 
\begin{equation*}
G_\rho(x, y):=\frac{1}{\omega_{N-1}} \Bigg[ \frac{1}{(N-2)|x-y|^{N-2}} - \sum_{m=0}^{\infty} Q_m(x, y) Z_m\left(\frac{x}{|x|}, \frac{y}{|y|}\right) \Bigg],
\end{equation*}
where
\begin{equation*}
Q_m(x, y):=\frac{\rho^{2m+N-2} - \rho^{2m+N-2}(|x|^{2m+N-2}+|y|^{2m+N-2}) + |x|^{2m+N-2}|y|^{2m+N-2}}{(2m+N-2)(|x||y|)^{m+N-2}(1-\rho^{2m+N-2})},
\end{equation*}
and $Z_m(\zeta, \eta)$ represents the zonal harmonics of degree $m$ which have a particularly simple expression in terms of the Gegenbauer (or ultra-spherical) polynomials $C_{m}^{\lambda}$. The latter can be defined in terms of generating functions. If we write (see \cite{SW} p. 148)
\begin{equation*}
(1-2rt+r^{2})^{-\lambda} = \sum_{m=0}^{\infty} C_m^{\lambda}(t)r^{m},
\end{equation*}
where $0 \leq r<1$, $|t| \leq 1$ and $\lambda>0$, then the coefficient $C_m^{\lambda}$ is called Gegenbauer polynomial of degree $m$ associated with $\lambda$. \ \\
In the following we let, for all $m \geq 1$
\begin{equation}\label{binom}
d_{m} = A_{N,m} \cdot c_{N,m}\,,
\end{equation}
where 
\begin{equation}\label{ANm}
A_{N,m} : =\binom{N+m-3}{N-3}, \quad c_{N,m} = \frac{N+2m-2}{N-2}.
\end{equation}
An important result is to understand the link between the zonal harmonics and the Gegenbauer polynomials. Indeed, the following result is proved in \cite{GV}.
\begin{theorem}[\cite{GV}, Theorem $2.1$]\label{zmpm}
If $N>2$ and $m=0, 1, 2, \ldots$ then we have that for all $x',y'$ such that $|x'|=|y'|=1$ it holds
\begin{equation*}
Z_{m}(x',y')= c_{N,m} C_{m}^{\frac{N-2}{2}}(x' \cdot y').
\end{equation*}
\end{theorem}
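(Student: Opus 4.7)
The plan is to prove the identity by comparing coefficients in two generating-function expansions that both represent the Poisson kernel on the unit sphere $\Sph^{N-1}$.

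First I would recall the classical ``reproducing kernel'' identity for zonal harmonics: for $0\le r<1$ and $|x'|=|y'|=1$,
\begin{equation*}
\sum_{m=0}^{\infty} r^{m}\, Z_{m}(x',y') \;=\; \frac{1-r^{2}}{\bigl(1-2r\, x'\!\cdot y' + r^{2}\bigr)^{N/2}}.
\end{equation*}
This is just the expansion of the Poisson kernel on the unit ball in $\R^{N}$ (evaluated at $x=rx'$ and $y=y'$) in terms of its spherical-harmonic decomposition, together with the fact that $Z_{m}$ is the reproducing kernel of the space of spherical harmonics of degree $m$, depending only on $x'\!\cdot y'$ by orthogonal invariance.

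Next I would expand the right-hand side directly in terms of Gegenbauer polynomials. Setting $\lambda=(N-2)/2$ and $t=x'\!\cdot y'$, the defining generating function gives
\begin{equation*}
\frac{1}{(1-2rt+r^{2})^{(N-2)/2}}=\sum_{m=0}^{\infty} C_{m}^{(N-2)/2}(t)\, r^{m}.
\end{equation*}
The key step is to pass from exponent $(N-2)/2$ to exponent $N/2$ while producing the factor $1-r^{2}$. I would use the classical identity
\begin{equation*}
\frac{1-r^{2}}{(1-2rt+r^{2})^{\lambda+1}}=\sum_{m=0}^{\infty}\frac{m+\lambda}{\lambda}\,C_{m}^{\lambda}(t)\, r^{m},
\end{equation*}
which can be verified by differentiating the Gegenbauer generating function with respect to $r$, multiplying by $2r$, and adding the original expansion (the combination $(1-r^{2}) = (1-2rt+r^{2})+2r(t-r)$ is what produces the cancellation). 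Taking $\lambda=(N-2)/2$ so that $\lambda+1=N/2$, this yields
\begin{equation*}
\frac{1-r^{2}}{(1-2rt+r^{2})^{N/2}}=\sum_{m=0}^{\infty}\frac{2m+N-2}{N-2}\,C_{m}^{(N-2)/2}(t)\, r^{m} = \sum_{m=0}^{\infty} c_{N,m}\, C_{m}^{(N-2)/2}(t)\, r^{m},
\end{equation*}
with $c_{N,m}$ as in \eqref{ANm}.

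Finally, setting $t=x'\!\cdot y'$ and comparing coefficients of $r^{m}$ in the two power series (both representing the Poisson kernel, and uniformly convergent on $r\in[0,r_{0}]$ for any $r_{0}<1$) forces $Z_{m}(x',y')=c_{N,m}\,C_{m}^{(N-2)/2}(x'\!\cdot y')$ for every $m$. The only subtle point is the ``auxiliary'' generating-function identity above; everything else is bookkeeping. Since the hypothesis $N>2$ guarantees $\lambda=(N-2)/2>0$, the Gegenbauer generating function and its derivative are valid, and no degenerate cases arise.
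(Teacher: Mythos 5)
Your proof is correct. Note that the paper does not actually prove this statement: it is quoted verbatim as Theorem~2.1 of the reference~\cite{GV}, so there is no ``paper's own proof'' to compare against. That said, your argument is the standard one for this identity. The Poisson-kernel expansion $\sum_{m\ge 0} r^m Z_m(x',y') = (1-r^2)\,(1-2r\,x'\!\cdot y'+r^2)^{-N/2}$ is exactly \cite{ABR}, Theorem~5.33, specialised to $x=rx'$ with $|x'|=1$ and using the degree-$m$ homogeneity of $Z_m(\cdot,y')$. Your auxiliary identity
\[
\frac{1-r^2}{(1-2rt+r^2)^{\lambda+1}}=\sum_{m\ge 0}\frac{m+\lambda}{\lambda}\,C_m^{\lambda}(t)\,r^m
\]
is obtained correctly by writing $1-r^2=(1-2rt+r^2)+2r(t-r)$ and combining the Gegenbauer generating function with its $r$-derivative, and with $\lambda=(N-2)/2$ the coefficient $\frac{m+\lambda}{\lambda}$ is precisely $c_{N,m}=\frac{2m+N-2}{N-2}$. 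Equating coefficients of $r^m$ (justified by the uniform convergence of both series for $|r|\le r_0<1$, $|t|\le 1$) gives the claim. As a sanity check of the normalization, setting $x'=y'$ gives $C_m^{(N-2)/2}(1)=\binom{m+N-3}{N-3}=A_{N,m}$, so $c_{N,m}A_{N,m}=d_m$, consistent with point~4 of Proposition~\ref{axler}. No gaps.
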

Now, by using the expression and the properties of the Green function and 
of the Robin function in the annulus $\Omega_\rho$ we see that
$$\tau_\rho(\xi_j(r))=\tau_\rho(\xi_1(r))$$ since 
$|\xi_j(r)|=r$ for all $j$. \\ Moreover, we have also that $G_\rho(\xi_{l}(r), \xi_{j}(r)) = G_\rho(\xi_{l+1}(r), \xi_{j+1}(r))$.\\  Hence $$\Lambda(r)=k\left(\tau_\rho(\xi_1(r))-\sum_{j=1}^{k-1}G_\rho(\xi_1(r), \xi_{j+1}(r))\right)=:k \Lambda_1(r). $$
$\Lambda_1(r)$ is nothing else that the least eigenvalue of the matrix $M(\boldsymbol \xi)=\mathtt M(r)$. \\Indeed, first of all we see that the matrix $\mathtt M(r)$ is a circulant matrix, i.e. each column is obtained from the previous one by a rotation in the components:
\begin{center}
$A= \left( \begin{matrix}
a_0 & a_{k-1} & a_{k-2} & \cdots & a_2 & a_1 \\
a_1 & a_0 & a_{k-1} & \cdots & a_3 & a_2 \\
a_2 & a_1 & a_0 & \cdots & a_4 & a_3 \\
\vdots & \vdots & \vdots & & \vdots & \vdots \\
a_{k-1} & a_{k-2} & a_{k-3} & \cdots & a_1 & a_0 
\end{matrix} \right)$
\end{center}
with $$a_0 := \tau_\rho(\xi_{1}(r)),\quad\hbox{ and}\quad 
a_{j} := - G_\rho(\xi_{1}(r), \xi_{j+1}(r)),\,\ j=1, \cdots k-1.
$$
It is well known that the eigenvalues of a circulant matrix are explicitly given by  
\begin{equation*}
 \Lambda_{\ell}(r) = \sum_{j=0}^{k-1} a_j e^{\frac{2\pi \mathtt i}{k} j(\ell-1)}, \ \ell=1, \cdots, k.
\end{equation*}
Moreover, it easily follows that $$a_{k-j}=-G_\rho(\xi_{1}(r), \xi_{k-j+1}(r))=-G_\rho(\xi_{1}(r), \xi_{j+1}(r))=a_j,$$
since $\xi_{j+1}(r)=r\left(\cos\frac{2\pi j}{k}, \sin\frac{2\pi j }{k}, \boldsymbol 0\right)$ and $$\begin{aligned}\xi_{k-j+1}(r)&=r\left(\cos\frac{2\pi (k-j)}{k}, \sin\frac{2\pi (k-j)}{k}, \boldsymbol 0 \right)=r\left(\cos\frac{2\pi j}{k}, -\sin\frac{2\pi j}{k}, \boldsymbol 0 \right)\end{aligned}$$  have equal scalar  product with $\xi_1(r)=r(1,0, \boldsymbol 0)$.\\ Hence, the matrix $\mathtt{M}(r)$ is also symmetric and then all the eigenvalues are real. \\ We claim that
$$\Lambda_1(r)=\tau_\rho(\xi_1(r))-\sum_{j=1}^{k-1}G_\rho(\xi_1(r), \xi_{j+1}(r)) $$
is simple.
Indeed
$$\begin{aligned} \Lambda_\ell(r)&=a_0 +\sum_{j=1}^{k-1}a_j e^{\frac{2\pi\mathtt i}{k}j(\ell-1)}=\tau_\rho(\xi_1(r))-\sum_{j=1}^{k-1}G_\rho(\xi_1(r), \xi_{j+1}(r))\mathtt Re\left(e^{\frac{2\pi\mathtt i}{k}j(\ell-1)}\right)\\
&>\tau_\rho(\xi_1(r))-\sum_{j=1}^{k-1}G_\rho(\xi_1(r), \xi_{j+1}(r))=\Lambda_1(r).
\end{aligned}$$

\subsection{Critical points of the reduced problem}\label{cp}
The finite-dimensional reduction carried out in Section \ref{riduzione} implies that our problem reduces to investigate
the existence of critical points of the functional $\mathfrak J_\e$ in \eqref{JFrak}, which is a small $C^1$-perturbation of the function $\Psi$ defined in the following way:

\beq\label{psi}
\Psi(d, r):=\left\{\begin{aligned}&\mathfrak c_1 d^{N-2}\Lambda_1(r)\pm\mathfrak c_2 \log d\quad &\mbox{for}\,\, (\mathcal P_\pm^\e)\,\, \mbox{and}\,\, N\geq 3\\
&\mathfrak d_1 d^{N-2}\Lambda_1(r)\mp\mathfrak d_2 d^2\quad &\mbox{for}\,\, (\mathcal{BN}_\pm^\e)\,\, \mbox{and}\,\, N\geq 5\\\end{aligned}\right.
\eeq
Hence we are interested in the existence of stable critical points of $\Psi$ in the sense of the following definition introduced by Y.Y. Li in \cite{yyl}. 
\begin{definition}\label{yy1}
Given a smooth function $f:D\subset\mathbb R^n\to\mathbb R,$ a set $\mathscr K$ of critical points of $f$ is    stable if there exists a neighbourhood $\Theta$ of $\mathscr K$ such that  $\mathtt{deg}(\nabla f,\Theta,0)\not=0,$ where 
$\mathtt{deg}$ denotes  the Brouwer degree. \end{definition}
Examples of stable critical sets are listed below.
\begin{itemize}
\item  $\mathscr K$ is a strict local minimum (or maximum) set of $f,$ i.e. $f(x)=f(y)$ for any $x,y\in \mathscr K$ and 
$f(x)< f(y)$ (or $f(x)> f(y)$) for any $x\in \mathscr K$ and $y\in \Theta\setminus\mathscr K$.
\item $\mathscr K=\{x_0\}$ is an isolated critical point of $f$ with $\mathtt{deg}(\nabla f,B(x_0,\rho),0)\not=0$  for some small $\rho>0$ (e.g. $x_0$ is a non-degenerate critical point of $f$).\\
\end{itemize}
 
\noindent It is immediate to check that for any $\rho>0$ there exists a strict minimum point $r_0=r_0(\rho)\in (\rho,1)$ of the function $\Lambda_1,$
since
$$\lim\limits_{r\to \rho}\Lambda_1(r)=\lim\limits_{r\to 1}\Lambda_1(r)=+\infty.$$
At this point it is important to establish the sign of $\displaystyle{\min_{r\in(\rho, 1)}\Lambda_1(r)}$ in order to deduce the existence of a stable critical point of $\Psi$.\\ In what follows we remark the situation in the remaining cases (i.e. $N=3$ and $N=4$).\\


\begin{remark}\label{n34}
In dimensions $N=3, 4$ the reduction scheme is more involved with respect to the dimensions $N\geq 5$ for the problem $(\mathcal{BN}_+^\e)$.  However, in \cite{PRV}, the problem $(\mathcal{BN}_+^\e)$ is considered in dimension $N=4$ showing that a solution of the form \eqref{sol} exists and it concentrates at a stable critical point $r_0\in (\rho, 1)$ of $\Lambda_1(r)$  in which $\Lambda_1(r_0)>0$ with $\delta \sim e^{-\frac{\Lambda_1(r_0)}{\e}}$ (see Theorem 1.2 of \cite{PRV}). \\ The scheme is very similar for problem $(\mathcal{BN}_-^\e)$ in dimension $N=4$ (even if nothing is done in the literature) and hence, in this case, there exists a solution of the problem $(\mathcal{BN}_-^\e)$ if again $\Lambda_1(r)$ has stable critical points but now at negative level and the rate of concentration is $\delta \sim e^{\frac{\Lambda_1(r_0)}{\e}}$.\\\\ The case $N=3$ for the Brezis-Nirenberg problem is more delicate. Indeed, in \cite{BN} it is shown that a positive solution exists if $\e\in (\lambda^*, \lambda_1)$ where $\lambda^*$ is positive and it depends on the domain while $\lambda_1$ is the first eigenvalue of $-\Delta$ with Dirichlet boundary condition. Hence in dimension $N=3$, we can construct a solution of the form \eqref{sol} when $\e$ goes to some $\lambda^*>0$. This value was also characterized by Druet in \cite{Druet} as $$\lambda^*:=\sup\left\{\e>0\,:\, \min_\Omega \tau_\e>0\right\},\,$$ where now $\tau_\e$ is the Robin function defined as the trace of the regular part of the Green function $G_\e$ of the operator $-\Delta-\e {\rm Id}$.\\
Besides, it is also shown in \cite{Druet}, that least energy solutions $u_\e$ for $\e\downarrow\lambda^*$
constitute a single bubble with blowing-up near the set where $\tau_{\lambda^*}$ attains its minimum value zero.\\
In \cite{DDM}, a solution with a single positive bubble that concentrates around critical points of $\tau_{\lambda^*}$ at zero level is constructed, while the multi-bubble case was recently studied by Musso and Salazar in \cite{MS}. Here the matrix is  
$M_\e(\boldsymbol\xi)=(m^\e_{ij})_{1\leq i, j\leq k}$ defined by
\beq\label{matrixMe}
m^\e_{ii}:=\tau_\e(\xi_i),\quad m_{ij}=-G_\e(\xi_i, \xi_j), \,\, i\neq j\eeq 
and they are able to show the existence of a multispike positive solution that concentrates at a critical point at level zero of the function $\psi(\boldsymbol\xi)={\rm det} M_{\lambda^*}(\boldsymbol\xi)$.\\
In \cite{MS} it is also considered the case of the annulus $\Omega_\rho$ showing that for each $k\geq 2$ there exists $0 < \rho_k < 1$ such that if $\rho\in (\rho_k, 1)$,
then problem $(\mathcal{BN}_+^\e)$  in $\Omega_\rho$ has a solution with $k$ bubbles centered at
the vertices of a planar regular polygon for $\e$ that goes to some $\lambda^*\in (0, \lambda_1)$. 
The crucial point in their proof is to show that the first eigenvalue of the matrix $M_\e(\boldsymbol\xi)$ is strictly positive for $\e=0$ and then they use some continuity property to deduce their result. In this case the first eigenvalue for $\e=0$ is again given by $\Lambda_1(r)$ (see Section 7 of \cite{MS}).\\ We remark that the proof of the positivity of $\Lambda_1(r)$ is different from our argument.
\end{remark}

\section{The sign of $\Lambda_1(r)$}\label{segno}
From the previous analysis, it is clear that the main problem is to understand the sign of $\Lambda_1(r)$. So we write it explicitly by using the expressions of the Green function and of the Robin function in the annulus.\\
Firstly we point out that
\begin{equation}\label{xi}
\begin{aligned}
|\xi_1(r) - \xi_{j+1}(r)|^{N-2} & = \left( |\xi_1(r) - \xi_{j+1}(r)|^{2} \right)^{\frac{N-2}{2}} = \left( 2r^2 - 2 \xi_1(r) \cdot \xi_{j+1}(r) \right)^{\frac{N-2}{2}} 
\\& = \Bigg( 2r^2 - 2r^2 \cos \left( \frac{2\pi j}{k} \right) \Bigg)^{\frac{N-2}{2}} = r^{N-2} \Bigg(2\sin \left( \frac{\pi j}{k} \right)\Bigg)^{N-2}.
\end{aligned}
\end{equation}
for all $j=1, \cdots, k-1$. \ \\
Hence
\begin{equation*}
\sum_{j=1}^{k-1} \frac{1}{|\xi_1(r)-\xi_{j+1}(r)|^{N-2}} = \frac{\mathfrak C_{k,N}}{r^{N-2}},
\end{equation*}
where 
\begin{equation*}
\mathfrak C_{k,N} : =\sum_{j=1}^{k-1} \frac{1}{\left( 2 \sin \left( \frac{\pi j}{k} \right) \right)^{N-2}} .
\end{equation*} 

So, we have that
 \begin{equation*}
\begin{aligned}
\Lambda_{1}(r) & = \frac{1}{\omega_{N-1}} \Bigg[ \sum_{m=0}^{+\infty} d_{m} Q_{m}(r) - \sum_{j=1}^{k-1} \frac{1}{N-2} \frac{1}{|\xi_1-\xi_{j+1}|^{N-2}} + \sum_{j=1}^{k-1} \sum_{m=0}^{+\infty} Q_{m}(r) Z_{m} \left( \frac{\xi_1}{|\xi_1|}, \frac{\xi_{j+1}}{|\xi_{j+1}|} \right) \Bigg]\\
&=\frac{1}{\omega_{N-1}} \Bigg[- \frac{\mathfrak C_{k,N} }{N-2} \frac{1}{r^{N-2}}   + \sum_{m=0}^{+\infty} Q_m(r)\left(d_m+\sum_{j=1}^{k-1}Z_{m} \left( \frac{\xi_1}{|\xi_1|}, \frac{\xi_{j+1}}{|\xi_{j+1}|} \right)\right)\Bigg].
\end{aligned}
\end{equation*}
We see that
$$\Lambda_1(r)=\frac{1}{r^{N-2}}f(r)$$
where 
\begin{equation}\label{fr}f(r):={\frac{1}{\omega_{N-1}}}\Bigg[- \frac{\mathfrak C_{k,N} }{N-2}  + \sum_{m=0}^{+\infty} r^{N-2}Q_m(r)\left(d_m+\sum_{j=1}^{k-1}Z_{m} \left( \frac{\xi_1}{|\xi_1|}, \frac{\xi_{j+1}}{|\xi_{j+1}|} \right)\right)\Bigg].\end{equation} It is easy to see that 
\begin{equation}\label{iii}
\begin{aligned}
&(i)\quad &\displaystyle\min_{r\in(\rho, 1)}\Lambda_1(r)>0\quad &\iff\quad \displaystyle\min_{r\in(\rho, 1)}f(r)>0;\\
&(ii)\quad &\displaystyle\min_{r\in(\rho, 1)}\Lambda_1(r)=0\quad &\iff\quad \displaystyle\min_{r\in(\rho, 1)}f(r)=0;\\
&(iii)\quad & \displaystyle\min_{r\in(\rho, 1)}\Lambda_1(r)<0\quad &\iff\quad \displaystyle\min_{r\in(\rho, 1)}f(r)<0.\end{aligned}\end{equation}
Therefore, in order to study the sign of $\Lambda_1(r)$ a crucial role is played by $f(r)$.\\ 
In some cases, we need a different form of $f(r)$. Indeed, by using \eqref{binom} and Theorem \ref{zmpm}, we obtain that

\begin{equation}\label{fr1}
\begin{aligned}
f(r) & = \frac{1}{\omega_{N-1}} \Bigg[ - \frac{\mathfrak C_{k,N} }{N-2}   + \sum_{m=0}^{+\infty} r^{N-2}Q_{m}(r) c_{N,m} \Bigg( A_{N,m} + \sum_{j=1}^{k-1} C_{m}^{\frac{N-2}{2}} \left( \cos \left( \frac{2 \pi j}{k} \right) \right) \Bigg) \Bigg]\\
&=\frac{1}{\omega_{N-1}} \Bigg[- \frac{\mathfrak C_{k,N} }{N-2} + \sum_{m=0}^{+\infty} r^{N-2}Q_{m}(r) c_{N,m} \alpha_{k, m, N}\Bigg],
\end{aligned}
\end{equation}
where
\begin{equation}\label{alphakmN}
\alpha_{k, m, N}:= A_{N,m} + S_{k,m, N},
\end{equation}
with
\begin{equation}\label{Skm}
S_{k,m, N} := \sum_{j=1}^{k-1} C_{m}^{\frac{N-2}{2}} \left( \cos \left( \frac{2\pi j}{k} \right) \right).
\end{equation}


\subsection{Positivity of $\Lambda_1(r)$}
Here we will show the following:\\\\
{\it For any integer $k\geq 2$ there exists $\rho_k\in (0, 1)$ such that for any $\rho\in (\rho_k, 1)$
it holds true that }
\begin{equation}\label{condmin}
\min_{r\in (\rho, 1)} \Lambda_1(r)>0. \end{equation} Hence, from (i) of \eqref{iii}, this is equivalent to show that $\displaystyle\min_{r\in(\rho_k, 1)}f(r)>0$. In what follows, let us consider $f(r)$ in the form \eqref{fr1}. \\ Since we have to use different estimates for $\alpha_{k, m, N}$ in dimension $N=3$, $N=4$ and $N\geq 5$ we need to consider separately the various cases.
\subsubsection{The case $N=3$}\label{N3subsec}
In order to estimate $\alpha_{k, m, 3}$ we need to use the following expression of Gegenbauer polynomials (see $8.934, 2$ of \cite{GR})
\begin{equation*}
C_{m}^{\frac 12}(\cos \phi) = \sum_{i,l=0 \atop i+l=m}^{m} \frac{\Gamma(\frac{1}{2}+i) \Gamma(\frac{1}{2} + l)}{i! l! \pi} \cos ((i-l) \phi).
\end{equation*}
Moreover, it is well-known that, in this case, there is the following relation between Gegenbauer polynomials and Legendre polynomials $P_m(x)$ (see  $8.936, 3$ of \cite{GR}) $$C_m^{\frac 12}(x)=P_m(x)$$ and it is known that $P_m(1)=1$ (see  $8.828, 1$ of \cite{GR}). Then

\begin{equation}\label{cm121}
1 = P_m(1)=C_{m}^{\frac 12}(1)  =\sum_{i,l=0 \atop i+l=m}^{m} \frac{\Gamma(\frac{1}{2} + i) \Gamma(\frac{1}{2} + l)}{i! l! \pi}.
\end{equation}
So, in this case,
\begin{equation}\label{Skm3}
S_{k,m, 3} = \sum_{j=1}^{k-1} \sum_{i,l=0 \atop i+l=m}^{m} \frac{\Gamma(\frac{1}{2}+i) \Gamma(\frac{1}{2} + l)}{i! l! \pi}  \cos \left((i-l) \frac{2\pi j}{k}\right).
\end{equation}
By using \eqref{ANm}, \eqref{Skm3} and \eqref{cm121}, we have that
\begin{equation}\label{alphakm3}
\begin{aligned}\alpha_{k, m, 3}& = 1+ S_{k,m, 3}=\sum_{i,l=0 \atop i+l=m}^{m} \frac{\Gamma(\frac{1}{2}+i) \Gamma(\frac{1}{2} + l)}{i! l! \pi} \left(1 + \sum_{j=1}^{k-1} \cos \left((i-l) \frac{2\pi j}{k}\right) \right)\\
&=\sum_{l=0}^{m} \frac{\Gamma(\frac{1}{2}+m-l) \Gamma(\frac{1}{2} + l)}{(m-l)! l! \pi} \left(1 + \sum_{j=1}^{k-1} \cos \left((m-2l) \frac{2\pi j}{k}\right) \right).\\
\\\end{aligned}
\end{equation} 
We first study $ \displaystyle{\sum_{j=1}^{k-1} \cos \left(p \frac{2\pi j}{k}\right)}$ for some integer $p\in\mathbb N$. We need to distinguish two different cases, which are the following:
\begin{itemize}
\item[1.] $p$ is a multiple of $k$;
\item[2.] $p$ is not a multiple of $k$.
\end{itemize}
In the first case, we let $p=p_0 \cdot k, p_0 \in \mathbb{N}, p_0 \geq 1$. So
\begin{equation*}
\cos \left( p \cdot \frac{2\pi j}{k} \right) = \cos (p_0 \cdot 2\pi j) = 1,
\end{equation*}
from which we have that 
\begin{equation*}
\sum_{j=1}^{k-1} \cos \left( p\frac{2\pi j}{k} \right)=k-1.
\end{equation*}
In the second case we can observe that 
$
\sin \left(\frac{p\pi}{k} \right) \neq 0.
$
Hence
\begin{equation*}
\begin{aligned}
\sum_{j=1}^{k-1} \cos \left( p\frac{2\pi j}{k}  \right) &  = \frac{1}{ \sin\left(\frac{p\pi }{k}\right)} \sum_{j=1}^{k-1} \cos \left(  p\frac{2\pi j}{k} \right) \sin\left(\frac{p\pi }{k}\right) 
\\& = \frac{1}{2\sin\left(\frac{p\pi }{k}\right)} \sum_{j=1}^{k-1} \left[\sin \left(\frac{2\pi p}{k} j + \frac{\pi p}{k} \right)- \sin \left(\frac{2\pi p}{k} j - \frac{\pi p}{k} \right)\right]
\\& = \frac{1}{2\sin\left(\frac{p\pi }{k}\right)} \sum_{j=1}^{k-1} \left[\sin \left(\frac{2\pi p}{k} \left(j+\frac{1}{2}\right) \right) - \sin \left(\frac{2\pi p}{k} \left(j - \frac{1}{2}\right) \right) \right]
\\& = \frac{1}{2\sin\left(\frac{p\pi }{k}\right)} \Bigg[\sin \left(\frac{2\pi p}{k} \left(k-\frac{1}{2}\right) \right) - \sin \left(\frac{\pi p}{k} \right) \Bigg] 
\\& = \frac{1}{2\sin\left(\frac{p\pi }{k}\right)} \left( -2 \sin\left(\frac{p\pi }{k}\right) \right) = - 1.
\end{aligned}
\end{equation*}
At the end
\begin{equation}\label{sommatoria}
\sum_{j=1}^{k-1} \cos \left( p\frac{2\pi j}{k}  \right)=\left\{\begin{aligned} &k-1\quad &\mbox{if $p$ is a multiple of $k$}\\
&-1 \quad  &\mbox{if $p$ is not a multiple of $k$}.\end{aligned}\right.\end{equation}
Now, by using \eqref{alphakm3} we get that 
\begin{equation}\label{alpha3pari}\begin{aligned}\alpha_{k, 2m, 3}&=  \frac{\left(\Gamma\left(\frac 12 +m\right)\right)^2}{(m!)^2\pi}k +\sum_{l=0\atop l\neq m}^{2m} \frac{\Gamma(\frac{1}{2}+2m-l)\Gamma\left(\frac 12 +l\right) }{(2m-l)!l!  \pi} \left(1 + \sum_{j=1}^{k-1} \cos \left(2(m-l) \frac{2\pi j}{k}\right) \right)\\ 
&\geq  \frac{\left(\Gamma\left(\frac 12 +m\right)\right)^2}{(m!)^2\pi}k\geq \frac{k}{4m}\\
\end{aligned}\end{equation} since \eqref{sommatoria} holds and, by induction, it is easy to verify that 
\begin{equation*}
\frac{\Gamma(\frac{1}{2} + m)}{(m)!} \geq \frac{\sqrt\pi}{2\sqrt{m}},\quad \mbox{for any}\,\, m\geq 1.
\end{equation*}
Instead, by using again \eqref{sommatoria} we have that
\begin{equation}\label{alpha3dispari}
\alpha_{k, 2m+1, 3} \geq 0. 
\end{equation}


Now, we minimize $r Q_{m}(r)$, where (recalling \eqref{Qm})
\begin{equation*}
\begin{aligned}
r Q_{m}(r)  & = \frac{1}{(2m+1)(1-\rho^{2m+1})} \underbrace{\Bigg[ \frac{\rho^{2m+1}}{r^{2m+1}} -2\rho^{2m+1} + r^{2m+1} \Bigg]}_{:=g_{m,3}(r)}.
\end{aligned}
\end{equation*}

we have that 
\begin{equation*}\begin{aligned}
g'_{m,3}(r) &= -\frac{(2m+1) \rho^{2m+1}}{r^{2m+2}} + (2m+1) r^{2m} = \frac{2m+1}{r^{2m+2}} \Bigg[ -\rho^{2m+1} + r^{4m+2} \Bigg].\end{aligned}
\end{equation*}
So, we have a minimum point for $g_{m,3}(r)$ in $r=\sqrt{\rho}$. \ \\
In particular 
\begin{equation}\label{min3}
\min_{r\in(\rho,1)}r Q_m(r) = \sqrt{\rho}Q_{m}(\sqrt{\rho}) =\frac{2\rho^{m+\frac 12}}{(2m+1)(1-\rho^{2m+1})}\left(1-\rho^{m+\frac 12}\right).
\end{equation}
Finally, by using \eqref{min3}, \eqref{alpha3pari} and \eqref{alpha3dispari} we get
\begin{equation}\label{n3fin}
\begin{aligned}
 \min_{r \in (\rho,1)} f(r)  &= \frac{1}{\omega_2}\left(\sum_{m=0}^{+\infty} (2m+1) \alpha_{k,m, 3} \sqrt\rho Q_{m}(\sqrt\rho) -\mathfrak C_{k, 3}\right)
\\& \geq \frac{1}{\omega_2}\left(\sum_{m=0}^{+\infty} (4m+1)\alpha_{k, 2m, 3} \sqrt\rho Q_{2m}(\sqrt\rho)-\mathfrak C_{k, 3}\right)  \\
& \geq \frac{1}{\omega_2}\left(\frac{k}{2} \sum_{m=1}^{+\infty} \frac{1}{m}\frac{\rho^{\frac{4m+1}{2}}}{(1-\rho^{4m+1})} (1-\rho^{\frac{4m+1}{2}}) -\mathfrak C_{k, 3}\right)
\\& = \frac{1}{\omega_2}\left(\frac{k}{2}  \rho^{\frac{1}{2}} \sum_{m=1}^{+\infty}\frac{1}{m} \frac{\rho^{2m}}{1+\rho^{\frac{4m+1}{2}}} -\mathfrak C_{k, 3}\right)
\\& \geq  \frac{1}{\omega_2}\left(\frac{k}{4}  \rho^{\frac{1}{2}} \sum_{m=1}^{+\infty}\frac{\rho^{2m}}{m}-\mathfrak C_{k, 3}\right) \\
&=\frac{1}{\omega_2}\left( -  k \frac{\rho^{\frac{1}{2}}}{4} \log(1- \rho^{2}) -\mathfrak C_{k, 3}\right)>0,
\end{aligned}
\end{equation}
if $\rho$ is close to $1$, since $$-   \frac{k\rho^{\frac{1}{2}}}{4} \log(1- \rho^{2}) \xrightarrow{\rho \to 1^{-}} + \infty$$
and the claim holds. 
\subsubsection{The case $N=4$} \label{N4subsec}
As in the case $N=3$ we need to estimate $\alpha_{k, m, 4}$. Here $\mathfrak C_{k, 4}$ is explicitly given by (see, for instance \cite{AZ} or \cite{F} (Identity $23$)).
\begin{equation}\label{Ck4}
\mathfrak C_{k, 4} : = \frac 1 4\sum_{j=1}^{k-1} \frac{1}{\sin^2 \left( \frac{\pi j}{k} \right)}= \frac{k^2 -1}{12}.
\end{equation}
Moreover $c_{4, m}=m+1$ and $\alpha_{k, m, 4}$ is defined as in \eqref{alphakmN} with  $A_{4, m}:=m+1$ and 
\begin{equation*}
S_{k,m, 4} := \sum_{j=1}^{k-1} C_{m}^{1} \left( \cos \left( \frac{2\pi j}{k} \right) \right).
\end{equation*}
In this case, we use the relation between Gegenbauer's polynomials and Chebyshev polynomials by (see $8.937, 1$ of \cite{GR}), namely
\begin{equation}\label{cheby}
C_{m}^{1} \left( \cos t \right) = \frac{ \sin \left( (m+1) t \right)}{\sin  t }.
\end{equation}
Now, in order to evaluate $\alpha_{k,m, 4}$, we want to use the following recursive formula of the Chebyshev polynomials for every $m \geq 2$ (see $8.941, 2$ of \cite{GR})

\begin{equation*}
C_{m}^1(x) = 2xC_{m-1}^1(x) - C^1_{m-2}(x),
\end{equation*}
from which we have 
\begin{equation}\label{pm1}
\frac{1}{2} C_{m}^1(x) = xC_{m-1}^1(x) - \frac{1}{2} C^1_{m-2}(x).
\end{equation}
However, combining \eqref{cheby} with standard computations, we have that
\begin{equation}\label{pm1cos}
	\begin{aligned}
C_{m}^1(\cos t)  =\frac{\sin(mt)}{\sin t}\cos t + \cos(mt)=C_{m-1}^1(\cos t)\cos t+\cos(mt).
\end{aligned}
\end{equation}

Now, if we use \eqref{pm1} with $x=\cos t$, we obtain that 
\begin{equation}\label{pm2}
\frac{1}{2} C_{m}^1(\cos t) = C_{m-1}^{1}(\cos t) \cos t - \frac{1}{2} C_{m-2}^{1}(\cos t).
\end{equation}
Furthermore, combining \eqref{pm1cos} and \eqref{pm2}, we have that 
\begin{equation}\label{newformula}
C_{m}^1(\cos t) = 2\cos (mt) +C_{m-2}^1(\cos t).
\end{equation}
Now, we want to use \eqref{newformula} to evaluate $S_{k,m,4}$. Indeed, we have that
\begin{equation*}
S_{k,m, 4} = 2 \sum_{j=1}^{k-1} \cos \left( m\frac{2\pi j}{k} \right) + S_{k,m-2, 4}.
\end{equation*}
By using \eqref{sommatoria} we have that, if $m$ is a multiple of $k$
\begin{equation*}
\begin{aligned}
\alpha_{k,m, 4} & = m+1+S_{k,m, 4} = m-1+2k + S_{k,m-2, 4}  = 2k + \alpha_{k,m-2, 4}.
\end{aligned}
\end{equation*}
Afterwards, if $m$ is not a multiple of $k$, then by \eqref{sommatoria} we get 
\begin{equation*}
\alpha_{k,m, 4} = m+1+S_{k,m, 4}  = m -1 +S_{k,m-2} = \alpha_{k, m-2, 4}.
\end{equation*}
So, we have obtained that for any $m\geq 2$
\begin{equation}\label{alpha4}\alpha_{k,m, 4}:=\left\{\begin{aligned} &\alpha_{k,m-2, 4} +2k\quad &\mbox{if}\,\, m\,\, \mbox{is a multiple of}\,\, k\\
&\alpha_{k,m-2, 4}\quad &\mbox{if}\,\, m\,\, \mbox{is not a multiple of}\,\, k.\end{aligned}\right.\end{equation}
and hence \begin{equation}\label{alpha4uno}\alpha_{k,m, 4}\geq \alpha_{k,m-2, 4}\end{equation} for any $m\geq 2$.
In particular, by using \eqref{cheby} and \eqref{alpha4uno}
\begin{equation*}
\alpha_{k, 2m, 4} \geq \alpha_{k, 0, 4} = 1 + S_{k,0, 4} = 1 + \sum_{j=1}^{k-1} C_{0}^{1}\left( \cos \left( \frac{2\pi j}{k} \right) \right) = k;
\end{equation*}
instead,
\begin{equation*}
\alpha_{k,2m+1, 4} \geq \alpha_{k,1,4} = 2 + S_{k,1, 4} = 2+ 2 \sum_{j=1}^{k-1} \cos \left( \frac{2\pi j}{k} \right) =0.
\end{equation*}

Again, we minimize $r^2 Q_{m}(r)$, where now
\begin{equation*}
\begin{aligned}
r^2 Q_{m}(r) & = \frac{1}{2(m+1)(1-\rho^{2m+2})} \Bigg[ \frac{\rho^{2m+2}}{r^{2m+2}} -2\rho^{2m+2} + r^{2m+2} \Bigg].
\end{aligned}
\end{equation*}
As in the previous case $N=3$, we have that the minimum is achieved at $\sqrt\rho$ and
\begin{equation*}
\begin{aligned}
\rho Q_{m}(\sqrt{\rho}) & = \frac{1}{2(m+1)(1-\rho^{2m+2})} \left[2 \rho^{m+1} - 2\rho^{2m+2} \right]= \frac{\rho^{m+1}}{(m+1)(1+\rho^{m+1})}.
\end{aligned}
\end{equation*}
So
\begin{equation}\label{n4fin}
\begin{aligned}
\min_{r \in (\rho,1)} f(r)&=\frac{1}{2\omega_3}\left(2\sum_{m=0}^{+\infty} (m+1) \alpha_{k, m, 4}\rho Q_{m}(\sqrt{\rho}) -\mathfrak C_{k, 4}\right) \\
&\geq \frac{1}{2\omega_3}\left(2\sum_{m=0}^{+\infty} (2m+1) \alpha_{k, 2m, 4}\rho Q_{2m}(\sqrt{\rho})-\mathfrak C_{k, 4}\right)  \\
&\geq \frac{1}{2\omega_3}\left(2k\sum_{m=0}^{+\infty} \frac{\rho^{2m+1}}{1+\rho^{2m+1}}-\mathfrak C_{k, 4}\right)  \\& \geq \frac{1}{2\omega_3}\left(k \rho\sum_{m=0}^{+\infty} \rho^{2m}-\mathfrak C_{k, 4}\right) \\& =\frac{1}{2\omega_3}\left( k\frac{\rho}{1-\rho^2} -\mathfrak C_{k, 4}\right)>0 
\end{aligned}
\end{equation}
if $\rho$ is close to 1, 
since $$\frac{\rho}{1-\rho^2}  \xrightarrow{\rho \to 1^{-}} +\infty.$$
The thesis follows as before.
\subsubsection{The case $N\geq 5$}\label{N5subsec} In this case, in order to estimate $\alpha_{k, m, N}$, we have to use the following recursive formula for Gegenbauer polynomials (see for instance $22.7.23$ of \cite{AS})
\begin{equation}\label{pmlambdagen}
(m+\lambda) C_{m+1}^{\lambda-1}(x) = (\lambda-1)(C_{m+1}^{\lambda}(x) - C_{m-1}^{\lambda}(x)).
\end{equation}
Now, we recall that $\lambda=\frac{N-2}{2}$ (hence $\lambda-1=\frac{N-4}{2}$) and the definition of $S_{k, m, N}$ in \eqref{Skm} to rewrite \eqref{pmlambdagen} as 
\begin{equation}\label{recursiveN51}
S_{k,m+1, N} - S_{k, m-1, N}= \frac{2m+N-2}{N-4} S_{k, m+1, N-2}.
\end{equation}
Now, by using \eqref{alphakmN}, \eqref{ANm} and \eqref{recursiveN51} we get
\begin{equation*}
\begin{aligned}
\alpha_{k,m+1, N} - \alpha_{k,m-1, N} & = A_{N, m+1}-A_{N, m-1}+S_{k, m+1, N}-S_{k, m-1, N}
\\& = A_{N, m+1}-A_{N, m-1}+ \frac{2m+N-2}{N-4} S_{k,m+1, N-2}
\\& = A_{N, m+1}-A_{N, m-1} + \frac{2m+N-2}{N-4} \left( \alpha_{k,m+1, N-2} - A_{N-2, m+1}  \right) 
\\& = \underbrace{A_{N, m+1}-A_{N, m-1}- \frac{2m+N-2}{N-4} A_{N-2, m+1}  }_{(**)} + \frac{2m+N-2}{N-4} \alpha_{k,m+1, N-2}.
\end{aligned}
\end{equation*}
Now, with a straightforward computation, we can observe that $(**)=0$. 
Hence for any $m\in\mathbb N$
\begin{equation}\label{recursiveN5}
\alpha_{k,m+1, N} - \alpha_{k,m-1, N} = \frac{2m+N-2}{N-4} \alpha_{k,m+1, N-2}.
\end{equation}
By induction we can prove that for any $N\geq 3$ and any $m\geq 1$
\beq\label{sofferenza}\alpha_{k,m+1, N}-\alpha_{k, m-1, N}\geq 0.\eeq Indeed, for $N=5$ this follows directly by \eqref{recursiveN5}, since by \eqref{alpha3pari} and \eqref{alpha3dispari} we have $\alpha_{k,m+1,3}\geq 0$.\\ Now, we suppose that for any $m$ we have 
\begin{equation}\label{ipind}\alpha_{k,m+1, N-2} - \alpha_{k,m-1, N-2} \geq 0\end{equation} and we show that \eqref{sofferenza} holds. Indeed, by \eqref{recursiveN5} and by \eqref{ipind} we have 
$$\begin{aligned} \alpha_{k,m+1, N} - \alpha_{k,m-1, N}&\geq c_1\left\{\begin{aligned}&\alpha_{k, 1, N-2}\quad &\mbox{if}\,\, m\,\, \mbox{is even}\\
&\alpha_{k, 0, N-2}\quad &\mbox{if}\,\, m\,\, \mbox{is odd}.\end{aligned}\right.\end{aligned}$$
Now,  since $C_{1}^{\lambda}(x) = 2\lambda x$ (see $8.930, 2.10$ of \cite{GR}) , we can say that
\begin{equation*}
\begin{aligned}
\alpha_{k,1, N-2} &= A_{N-2, 1} + S_{k,1, N-2} = N-4 + \sum_{j=1}^{k-1}  C_{1}^{\frac{N-4}{2}} \left( \cos \left( \frac{2\pi j}{k} \right) \right)
\\& = (N-4) \left( 1+  \sum_{j=1}^{k-1} \cos \left( \frac{2\pi j}{k} \right) \right) = 0,
\end{aligned}
\end{equation*}
by using \eqref{sommatoria}. Similarly, we have
\begin{equation*}
\alpha_{k,0, N-2} = A_{N-2, 0}+ S_{k,0, N-2}  = 1 + \sum_{j=1}^{k-1}  C_{0}^{\frac{N-4}{2}} \left( \cos \left( \frac{2\pi j}{k} \right) \right)= k.
\end{equation*}
since $C_{0}^{\frac{N-4}{2}}(x) = 1$ (see $8.930, 1.10$ of \cite{GR}). Then, $\alpha_{k,m+1,N-2}\geq 0$, for any $m$ and \eqref{sofferenza} holds.\\ Again, we minimize $Q_{m}(r) r^{N-2}$ finding, as for the cases $N=3$ and $N=4$, a minimum point at $r=\sqrt{\rho}$. \ \\
In particular 
\begin{equation}\label{minQm}
(2m+N-2) \rho^{\frac{N-2}{2}} \cdot Q_{m}(\sqrt{\rho})   = \frac{2\rho^{\frac{2m+N-2}{2}}}{1+\rho^{\frac{2m+N-2}{2}}}.
\end{equation}
So
\begin{equation}\label{n5fin}
\begin{aligned}
\min_{r \in (\rho,1)}f(r)&=\frac{1}{\omega_{N-1}(N-2)}\left( \sum_{m=0}^{+\infty} (2m+N-2)\alpha_{k, m, N} Q_{m}(\sqrt{\rho}) \rho^{\frac{N-2}{2}} -\mathfrak C_{k, N}\right)\\
&= \frac{1}{\omega_{N-1}(N-2)}\left(\sum_{m=0}^{+\infty} \alpha_{k, m, N} \frac{2\rho^{\frac{2m+N-2}{2}}}{1+\rho^{\frac{2m+N-2}{2}}}-\mathfrak C_{k, N}\right)\\
&\geq \frac{1}{\omega_{N-1}(N-2)}\left(\sum_{m=0}^{+\infty} \alpha_{k, 2m, N} \frac{2\rho^{\frac{4m+N-2}{2}}}{1+\rho^{\frac{4m+N-2}{2}}}-\mathfrak C_{k, N}\right)\\
& \geq \frac{1}{\omega_{N-1}(N-2)}\left(k\sum_{m=0}^{+\infty} \rho^{\frac{4m+N-2}{2}}-\mathfrak C_{k, N}\right) \\
&= \frac{1}{\omega_{N-1}(N-2)}\left(k\rho^{\frac{N-2}{2}} \sum_{m=0}^{+\infty} \rho^{2m} -\mathfrak C_{k, N}\right)\\
&= \frac{1}{\omega_{N-1}(N-2)}\left(k\frac{\rho^{\frac{N-2}{2}}}{1-\rho^2} -\mathfrak C_{k, N}\right)>0
\end{aligned}
\end{equation}

if $\rho$ is close to $1$, since $$\frac{\rho^{\frac{N-2}{2}}}{1-\rho^2} \xrightarrow{\rho \to 1^{-}} +\infty.$$

Here we use also the fact that by using \eqref{recursiveN5} and \eqref{sofferenza} we get that

$$\alpha_{k, 2m, N}\geq \alpha_{k, 0, N}\geq k; \quad \alpha_{k, 2m+1, N}\geq \alpha_{k,1, N}\geq0.$$

\subsection{Negativity of $\Lambda_1(r)$}
Here we will show the following:\\\\
{\it For any integer $k$ there exists $\bar\rho_k\in (0, 1)$ such that for any $\rho\in (0,\bar\rho_k)$
it holds true that }
\begin{equation}\label{condmin1}
\min_{r\in (\rho, 1)} \Lambda_1(r)<0. \end{equation} From (iii) of \eqref{iii} this is equivalent to show that  $\displaystyle\min_{r\in(\rho, 1)}f(r)<0$.
\\In what follows we will use $f(r)$ in the form \eqref{fr}. 
Here we don't need to distinguish the different dimensions. \ \\
Before starting with the proof we introduce some preliminaries that regard the properties of the zonal harmonics.
\begin{proposition}(Proposition $5.27$, \cite{ABR})\label{axler}
Suppose $\xi, \eta$ such that $|\xi|=|\eta|=1$ and $m \geq 0$. Then
\begin{itemize}
\item[1.] $Z_{m}$ is real valued;
\item[2.] $Z_{m}(\xi, \eta)=Z_{m}(\eta, \xi)$;
\item[3.] $Z_{m}(\xi, T(\eta)) = Z_{m}(T^{-1}(\xi), \eta)$ for all $T \in O(n)$;
\item[4.] $Z_{m}(\eta, \eta) = d_{m}$;
\item[5.] $|Z_{m}(\eta, \xi)| \leq d_{m}$.
\end{itemize}
\end{proposition}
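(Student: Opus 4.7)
The plan is to reduce each of the five properties to a statement about the Gegenbauer polynomial $C_m^{(N-2)/2}$ evaluated at $\xi\cdot\eta\in[-1,1]$, by invoking Theorem \ref{zmpm}, which identifies $Z_m(\xi,\eta)=c_{N,m}\,C_m^{(N-2)/2}(\xi\cdot\eta)$ whenever $|\xi|=|\eta|=1$. Once this reduction is in place, properties (1)--(3) become essentially immediate: since $C_m^{(N-2)/2}$ has real coefficients it takes real values on $[-1,1]$ (property (1)); commutativity of the dot product gives property (2); and the orthogonal-invariance identity $\xi\cdot T(\eta)=T^{-1}(\xi)\cdot\eta$, valid for every $T\in O(N)$, gives property (3).

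For property (4) I would use the standard evaluation $C_m^\lambda(1)=\binom{m+2\lambda-1}{m}$, obtained by setting $t=1$ in the generating-function identity $(1-2rt+r^2)^{-\lambda}=\sum_m C_m^\lambda(t)r^m$, which collapses to $(1-r)^{-2\lambda}=\sum_m\binom{m+2\lambda-1}{m}r^m$. With $\lambda=(N-2)/2$ this gives $C_m^{(N-2)/2}(1)=\binom{m+N-3}{N-3}=A_{N,m}$, whence $Z_m(\eta,\eta)=c_{N,m}A_{N,m}=d_m$ by \eqref{binom}, as required.

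The most delicate point, and in my view the main obstacle, is property (5): the sharp bound $|Z_m(\eta,\xi)|\leq d_m$. Via Theorem \ref{zmpm} this is equivalent to $|C_m^\lambda(t)|\leq C_m^\lambda(1)$ for all $t\in[-1,1]$ and $\lambda>0$, which one could prove directly from the generating series or from Szeg\H{o}-type orthogonal-polynomial estimates, but both routes involve nontrivial positivity arguments. A cleaner path is through an independent reproducing-kernel representation: choosing a real $L^2$-orthonormal basis $\{Y_1,\dots,Y_{d_m}\}$ of the space $\mathcal{H}_m$ of spherical harmonics of degree $m$, one has $Z_m(\xi,\eta)=\sum_{j=1}^{d_m}Y_j(\xi)Y_j(\eta)$, and Cauchy--Schwarz then yields $|Z_m(\xi,\eta)|\leq Z_m(\xi,\xi)^{1/2}Z_m(\eta,\eta)^{1/2}=d_m$, where the final equality is property (4). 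Establishing this reproducing-kernel representation and reconciling it with the Gegenbauer expression of Theorem \ref{zmpm} is the one nontrivial background input; with it in hand all five claims follow cleanly.
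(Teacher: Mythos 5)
The paper does not prove this proposition at all: it is stated and used as a black box, with the proof delegated to Proposition 5.27 of Axler--Bourdon--Ramey \cite{ABR}. Your argument is correct, but it is worth noting that it follows a different route from the one in the cited source. In \cite{ABR}, properties (1)--(4) are derived directly from the abstract characterization of $Z_m(\cdot,\eta)$ as the reproducing kernel of $\mathcal H_m$ (real-valuedness from uniqueness of the kernel, symmetry and $O(n)$-equivariance from the basis expansion and rotation-invariance of $\mathcal H_m$, and (4) by noting $Z_m(\eta,\eta)$ is constant on the sphere and integrating); the Gegenbauer formula is a downstream consequence, not the starting point. You instead take Theorem \ref{zmpm} as the starting point, so (1)--(4) reduce to elementary facts about $C_m^{(N-2)/2}$ on $[-1,1]$, including the classical value $C_m^\lambda(1)=\binom{m+2\lambda-1}{m}$, which correctly yields $c_{N,m}A_{N,m}=d_m$ via \eqref{binom}. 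For (5) both approaches coincide: the expansion $Z_m(\xi,\eta)=\sum_{j=1}^{d_m}Y_j(\xi)Y_j(\eta)$ in a real orthonormal basis of $\mathcal H_m$ plus Cauchy--Schwarz, closed off by (4). One small remark: the ``reconciliation'' you flag as the nontrivial input is not an extra obstacle --- Theorem \ref{zmpm} \emph{is} precisely the statement that the reproducing-kernel $Z_m$ agrees with $c_{N,m}C_m^{(N-2)/2}(\xi\cdot\eta)$ on the sphere, so once you invoke it for (1)--(4) you have already granted yourself both representations, and using the basis form for (5) incurs no additional cost.
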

Here we have a technical estimate.
\begin{lemma}\label{rmk}
For all $N\geq 3$ and for all $m \geq 1$, we have that
\begin{equation*}
A_{N,m} \leq m^{N-3} \frac{(N-2)^{N-3}}{(N-3)!}.
\end{equation*}
\end{lemma}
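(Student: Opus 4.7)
The plan is to reduce the inequality to an elementary factor-by-factor bound on a rising factorial. First I would expand the binomial coefficient as
\[
A_{N,m}=\binom{N+m-3}{N-3}=\frac{(m+1)(m+2)\cdots(m+N-3)}{(N-3)!},
\]
so that the claim becomes equivalent, after clearing the factorial, to
\[
(m+1)(m+2)\cdots(m+N-3)\;\le\;\bigl(m(N-2)\bigr)^{N-3}.
\]
The case $N=3$ is trivial since both sides equal $1$ (empty product), so I may assume $N\ge 4$.

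Next I would bound each of the $N-3$ factors on the left by the largest one, $m+N-3$, obtaining
\[
(m+1)(m+2)\cdots(m+N-3)\le (m+N-3)^{N-3}.
\]
It then suffices to verify the single inequality $m+N-3\le m(N-2)$ for all $m\ge 1$ and $N\ge 4$. This follows from
\[
m(N-2)-(m+N-3)=(N-3)(m-1)\ge 0,
\]
since both factors are nonnegative under our hypotheses. Combining the two steps and dividing by $(N-3)!$ gives the stated estimate.

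There is no real obstacle here; the only minor care needed is the degenerate case $N=3$ (in which $A_{3,m}=1$ and the right-hand side is $m^0(N-2)^0/0!=1$, so the bound is an equality) and the observation that equality also holds at $m=1$, $N\ge 4$, which is consistent with the argument above. The estimate is not tight for large $m$, but it is of the correct polynomial order $m^{N-3}$, which is what is needed in the applications of the lemma later in the paper.
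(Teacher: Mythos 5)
Your proof is correct and follows essentially the same route as the paper: both expand $A_{N,m}=\prod_{l=1}^{N-3}(m+l)/(N-3)!$, bound the product by $(m+N-3)^{N-3}$, and then reduce the remaining comparison to the inequality $m+N-3\le m(N-2)$ (the paper phrases this last step as $\bigl(1+\tfrac{N-3}{m}\bigr)^{N-3}\le (N-2)^{N-3}$, which is the same thing). Your explicit treatment of the degenerate case $N=3$ is a small but welcome addition that the paper leaves implicit.
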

\begin{proof}
By \eqref{ANm}
\begin{equation*}
\begin{aligned}
A_{N,m} & : = \binom{N+m-3}{N-3} = \frac{(N+m-3)!}{m!(N-3)!} = \frac{m! \displaystyle\prod_{l=1}^{N-3} (m+l)}{m! (N-3)!} \leq \frac{(m+N-3)^{N-3}}{(N-3)!}
\\& = \frac{m^{N-3}\left( 1+ \frac{N-3}{m} \right)^{N-3}}{(N-3)!} \leq m^{N-3} \frac{(N-2)^{N-3}}{(N-3)!}.
\end{aligned}
\end{equation*}
\end{proof}
By using $5$ of Proposition \ref{axler}, we have that
\begin{equation*}
\begin{aligned}
f(r) & = \frac{1}{\omega_{N-1}} \Bigg[ -\frac{\mathfrak C_{k, N}}{N-2}   + \sum_{m=0}^{+\infty} r^{N-2}Q_{m}(r) \left( d_m + \sum_{j=1}^{k-1} Z_{m} \left( \frac{\xi_1}{|\xi_1|}, \frac{\xi_{j+1}}{|\xi_{j+1}|} \right) \right) \Bigg]
\\& \leq \frac{1}{\omega_{N-1}} \Bigg[ - \frac{\mathfrak C_{k, N}}{N-2}  + k \sum_{m=0}^{+\infty} d_{m}r^{N-2} Q_{m}(r) \Bigg]\\
&=\frac{1}{\omega_{N-1}(N-2)}\Bigg[ -\mathfrak C_{k, N}  + k \sum_{m=0}^{+\infty} (N+2m-2) A_{N,m} r^{N-2} Q_{m}(r) \Bigg]
\end{aligned}
\end{equation*}
where the last equality follows from the definition of $d_m$ given in \eqref{binom}.\\
Now, if we are able to prove that, for $\rho \to 0$,
\begin{equation}\label{aim}
\min_{r \in [\rho,1]} \sum_{m=0}^{+\infty} (N+2m-2) A_{N,m} Q_{m}(r) r^{N-2} \to 0
\end{equation}
we get the claim. \ \\ To do so, we introduce the Eulerian numbers $A(n, m)$ which are the coefficients of the Eulerian polynomials. It is known that \beq\label{numeul}
\sum_{\ell=0}^{n-1}A(n, \ell)=n!\eeq Euler numbers appear in the generating function of sequences of $n$-th powers
\beq\label{succ}
\sum_{m=0}^{\infty}m^n x^m=\frac{\sum_{\ell=0}^{n-1}A(n, \ell)x^{\ell+1}}{(1-x)^{n+1}}, \quad\mbox{for}\,\, n\geq 0.\eeq

Now, using Lemma \ref{rmk}, \eqref{minQm}, \eqref{numeul}, \eqref{succ} (with $n=N-3$ and $x=\rho$) and letting $\gamma_{N} : = \frac{(N-2)^{N-3}}{(N-3)!}$, we can observe that 
\begin{equation}\label{negativo}
\begin{aligned}
&\min_{r \in [\rho,1]} \sum_{m=0}^{+\infty} (N+2m-2) A_{N,m} Q_{m}(r) r^{N-2} =\sum_{m=0}^{+\infty} (N+2m-2) A_{N,m} Q_{m}(\sqrt\rho) \rho^{\frac{N-2}{2}}\\
&\hskip+1.0cm = \sum_{m=0}^{+\infty} A_{N,m} \frac{2 \rho^{\frac{N+2m-2}{2}}}{1+\rho^{\frac{2m+N-2}{2}}} \leq 2 \rho^{\frac{N-2}{2}}\gamma_N\sum_{m=0}^{+\infty} m^{N-3} \rho^{m}\\
&\hskip+1.0cm =2 \rho^{\frac{N-2}{2}}\gamma_N\frac{\displaystyle\sum_{\ell=0}^{N-4}A(N-3, \ell)\rho^{\ell+1}}{(1-\rho)^{N-2}}<2 \rho^{\frac{N-2}{2}}\gamma_N\frac{\displaystyle\sum_{\ell=0}^{N-4}A(N-3, \ell)}{(1-\rho)^{N-2}}\\
&\hskip+1.0cm =2 \rho^{\frac{N-2}{2}}\gamma_N\frac{(N-3)!}{(1-\rho)^{N-2}}=2 (N-2)^{N-3}\frac{\rho^{\frac{N-2}{2}}}{(1-\rho)^{N-2}}\xrightarrow{\rho \to 0} 0.
\end{aligned}
\end{equation}


\section{Proof of Theorems \ref{main1}, \ref{main2} }\label{proof}
\begin{proposition}\label{main3}
For any $\rho\in (0, 1)$, there exists a unique critical point of $\Lambda_1(r)$ in $(\rho, 1)$, which is a global minimum. 
\end{proposition}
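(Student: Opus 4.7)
Since $\tau_\rho$ blows up at $\partial\Omega_\rho$ while $G_\rho(\xi_1(r),\xi_{j+1}(r))\to 0$ as $r\to\rho^+$ or $r\to 1^-$, one has $\Lambda_1(r)\to +\infty$ at both endpoints of $(\rho,1)$. Hence $\Lambda_1$ attains its infimum at some interior point of $(\rho,1)$, which is automatically a critical point, so only the uniqueness requires proof.

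The plan is to exploit the factorization $\Lambda_1(r)=f(r)/r^{N-2}$ already introduced in Section \ref{segno} and to analyze
\[
H(r) := r^{N-1}\Lambda_1'(r) = rf'(r)-(N-2)f(r).
\]
Critical points of $\Lambda_1$ in $(\rho,1)$ are precisely the zeros of $H$. A direct computation yields $H'(r)=rf''(r)-(N-3)f'(r)$, which coincides, up to the positive factor $r^{N-2}$, with the derivative of $f'(r)/r^{N-3}$. The strategy is then to show that $f'(r)/r^{N-3}$ is strictly increasing on $(\rho,1)$; this makes $H$ strictly increasing, so $H$ has at most one zero, and combined with the existence of a minimum this identifies a unique critical point that is automatically the global minimum.

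To prove the strict monotonicity, I would rewrite $r^{N-2}Q_m(r)=\frac{1}{s_m(1-\rho^{s_m})}\bigl[r^{s_m}+\rho^{s_m}r^{-s_m}-2\rho^{s_m}\bigr]$ with $s_m:=2m+N-2$, and differentiate the expansion \eqref{fr1} term by term to obtain
\[
\frac{f'(r)}{r^{N-3}} = \frac{1}{\omega_{N-1}}\sum_{m=0}^{\infty}\frac{c_{N,m}\,\alpha_{k,m,N}}{1-\rho^{s_m}}\Bigl[r^{2m}-\rho^{s_m}r^{-(2m+2N-4)}\Bigr].
\]
Each bracket is strictly increasing in $r$, since its derivative equals $2mr^{2m-1}+(2m+2N-4)\rho^{s_m}r^{-(2m+2N-3)}>0$ for every $N\geq 3$ and $m\geq 0$; already the $m=0$ term is a strictly increasing function of $r$. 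The key substantive input is then the non-negativity of the weights $\alpha_{k,m,N}$, together with the strict positivity $\alpha_{k,0,N}=k>0$: this is precisely what has been established in Section \ref{segno} via the Gegenbauer recursion \eqref{recursiveN5} and the explicit base values. Thus $f'(r)/r^{N-3}$ is a non-negatively weighted sum of strictly increasing functions with at least one strictly positive weight, hence itself strictly increasing, and the conclusion follows at once. The only real obstacle in the argument is the sign information on the $\alpha_{k,m,N}$; once imported from Section \ref{segno}, the remaining computations are elementary.
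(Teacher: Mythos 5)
Your proof is correct. It is close in spirit to the paper's argument, with both reducing to the same computation and the same key input, but the packaging differs in a way worth noting. The paper works locally: it computes $\Lambda_1''(r)$, plugs in the critical-point equation $-\mathfrak C_{k,N}/r_0^{N-1}=\sum c_{N,m}\alpha_{k,m,N}Q_m'(r_0)$ to eliminate the troublesome $\mathfrak C_{k,N}$-term, and shows the resulting series for $\Lambda_1''(r_0)$ is positive, so every critical point is a nondegenerate minimum and therefore unique. You instead observe that the constant $-\mathfrak C_{k,N}/(N-2)$ in $f$ vanishes under differentiation, so one can argue globally: the identity
\[
H'(r)=r^{N-2}\,\frac{d}{dr}\!\left[\frac{f'(r)}{r^{N-3}}\right],
\]
together with the termwise expansion you derive, gives $H'>0$ on all of $(\rho,1)$, whence $H(r)=r^{N-1}\Lambda_1'(r)$ has at most one zero. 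In substance the two arguments use the same elimination trick for $\mathfrak C_{k,N}$ (critical-point equation vs.\ differentiating a constant), and both hinge on the sign information $\alpha_{k,m,N}\geq 0$ and $\alpha_{k,0,N}=k>0$ established in Section \ref{segno}; the termwise quantities you obtain,
\[
\frac{c_{N,m}\alpha_{k,m,N}}{1-\rho^{s_m}}\Bigl[2mr^{2m-1}+(2m+2N-4)\rho^{s_m}r^{-(2m+2N-3)}\Bigr],
\]
are, up to a factor of $r$, exactly the paper's $(I_1)$. Your global monotonicity formulation is a clean alternative that avoids computing $Q_m''$ and does not need to restrict attention to critical points, so it is arguably slightly tidier, though not substantively different.
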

\begin{proof}
We recall that 
\begin{equation*}
\Lambda_1(r):=\frac{1}{\omega_{N-1}}\left[- \frac{1}{N-2} \frac{1}{r^{N-2}} \mathfrak C_{k,N}  + \sum_{m=0}^{+\infty} Q_{m}(r) c_{N,m} \alpha_{k, m, N}\right]
\end{equation*}
where $\alpha_{k, m, N}$ are defined in \eqref{alphakmN}. \\

We compute the first and second derivative of $\Lambda_1$ with respect to $r$, getting
\begin{equation*}
\Lambda'_{1}(r) : = \frac{1}{\omega_{N-1}} \Bigg[ \frac{\mathfrak C_{k,N}}{r^{N-1}} + \sum_{m=0}^{+\infty} c_{N,m}\alpha_{k, m, N} Q'_{m}(r) \Bigg]
\end{equation*}
and
\begin{equation}\label{lambda''}
\Lambda''_{1}(r) : = \frac{1}{\omega_{N-1}} \Bigg[ -\frac{(N-1)\mathfrak C_{k,N}}{r^{N}} + \sum_{m=0}^{+\infty} c_{N,m}\alpha_{k, m, N} Q''_{m}(r) \Bigg].
\end{equation}
Let $r_0$ be a critical point of $\Lambda_1(r)$ , then we have that 
\begin{equation}\label{primar0}
- \frac{\mathfrak C_{k,N}}{r^{N-1}_0} =  \sum_{m=0}^{+\infty} c_{N,m} \alpha_{k, m, N} Q'_{m}(r_0).
\end{equation} 
Now by using  \eqref{lambda''} and \eqref{primar0}
\begin{equation*}
\Lambda_1''(r_0)= \frac{1}{\omega_{N-1}} \sum_{m=0}^{+\infty} c_{N,m} \alpha_{k, m, N}  \underbrace{\Bigg[ (N-1) \frac{Q'_{m}(r_0)}{r_0} + Q''_{m}(r_0)}_{(I_1)} \Bigg].
\end{equation*}

\noindent Now, we observe that
\begin{equation*}
\begin{aligned}
Q'_{m}(r_0) & = \frac{1}{(2m+N-2)(1-\rho^{2m+N-2})} \Bigg[ -(2m+2N-4) \rho^{2m+N-2} r_0^{-2m-2N+3} 
\\& +2(N-2) \rho^{2m+N-2} r_0^{1-N} + (2m) r_0^{2m-1} \Bigg], 
\end{aligned}
\end{equation*}
while 
\begin{equation*}
\begin{aligned}
Q''_{m}(r_0) & = \frac{1}{(2m+N-2)(1-\rho^{2m+N-2})} \Bigg[ (2m+2N-4)(2m+2N-3)\rho^{2m+N-2} r_0^{-2m-2N+2} 
\\& -2(N-2)(N-1) \rho^{2m+N-2} r_0^{-N} + (2m)(2m-1) r_0^{2m-2} \Bigg].
\end{aligned}
\end{equation*}
Therefore
\begin{equation*}
\begin{aligned}
(I_1) & =  \frac{1}{(1-\rho^{2m+N-2})} \Bigg [(2N+2m-4) \rho^{2m+N-2} r_0^{-2N-2m+2} 
  + 2m r_0^{2m-2} \Bigg] 
\end{aligned}
\end{equation*}
and it is easy to see that $(I_1) >0$. \\
In sections \ref{N3subsec}, \ref{N4subsec} and \ref{N5subsec}, we proved that  $$\alpha_{k, m, N}\geq\left\{\begin{aligned} &0 \quad &\mbox{if}\,\, m\,\, \mbox{is odd}\\
&k \quad &\mbox{if}\,\, m\,\, \mbox{is even}.\end{aligned}\right.$$ Hence
$$\Lambda_1''(r_0)\geq \frac{1}{\omega_{N-1}}\sum_{m=0}^{+\infty} c_{N,2m} \alpha_{k, 2m, N}  \Bigg[ (N-1) \frac{Q'_{2m}(r_0)}{r_0} + Q''_{2m}(r_0) \Bigg]>0.$$ This means that $\Lambda_1(r)$ cannot have maximum points or inflection points. Thus  it has a unique critical point, which is a global minimum.

\end{proof}

\begin{proposition}\label{unicozero}
For any $k\geq 2$ there exists $\rho_k>0$ so that the function $$\mathfrak m: \rho\in (0, 1) \mapsto \min_{r\in (\rho, 1)}\Lambda_1(r)$$ has a unique zero in $\rho_k$.
\end{proposition}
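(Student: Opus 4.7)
The plan is to prove three facts: (a) $\mathfrak m$ is continuous on $(0,1)$; (b) $\mathfrak m$ is strictly increasing; and (c) $\mathfrak m(\rho)<0$ for $\rho$ close to $0$, while $\mathfrak m(\rho)\to+\infty$ as $\rho\to 1^-$. Combining these three ingredients via the intermediate value theorem yields existence and uniqueness of a zero $\rho_k$.

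Point (c) is essentially already in hand from Section \ref{segno}. The chains of inequalities culminating in \eqref{n3fin}, \eqref{n4fin}, \eqref{n5fin} show that $\min_{r}f(r)\to+\infty$ as $\rho\to 1^-$, and since $\Lambda_1(r)=f(r)/r^{N-2}>f(r)$ whenever $f(r)>0$ and $r<1$, the same blow-up holds for $\mathfrak m(\rho)$. Conversely, the estimate \eqref{negativo} gives $\min_r f(r)<0$, and hence $\mathfrak m(\rho)<0$, for $\rho$ sufficiently small.

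The substantive point is (b), which I would establish through the classical domain monotonicity of the Dirichlet Green function. Writing $\Lambda_1(r;\rho)$ to emphasize the $\rho$-dependence of $G_\rho$ and $\tau_\rho$, fix $0<\rho_1<\rho_2<1$, so $\Omega_{\rho_2}\subset\Omega_{\rho_1}$. For $y\in\Omega_{\rho_2}$ fixed, the map $x\mapsto G_{\rho_1}(x,y)-G_{\rho_2}(x,y)$ is harmonic in $\Omega_{\rho_2}$ (the singularities at $x=y$ cancel) and equals $G_{\rho_1}(\cdot,y)\ge 0$ on $\partial\Omega_{\rho_2}$, strictly positive on the inner sphere $\{|x|=\rho_2\}$ since this lies in the interior of $\Omega_{\rho_1}$. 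The strong maximum principle then yields $G_{\rho_1}(x,y)>G_{\rho_2}(x,y)$ for every $x\in\Omega_{\rho_2}$, $x\ne y$. The same argument applied to $x\mapsto H_{\rho_2}(x,y)-H_{\rho_1}(x,y)$, which is harmonic in $\Omega_{\rho_2}$ with non-negative boundary trace $G_{\rho_1}(\cdot,y)$, produces $\tau_{\rho_1}(x)<\tau_{\rho_2}(x)$ for $x\in\Omega_{\rho_2}$. Combining these two facts, for every $r\in(\rho_2,1)$,
$$\Lambda_1(r;\rho_1)=\tau_{\rho_1}(\xi_1(r))-\sum_{j=1}^{k-1}G_{\rho_1}(\xi_1(r),\xi_{j+1}(r))<\Lambda_1(r;\rho_2).$$
Letting $r_2^\star\in(\rho_2,1)$ be the unique minimizer provided by Proposition \ref{main3}, the inequality
$$\mathfrak m(\rho_1)\le\Lambda_1(r_2^\star;\rho_1)<\Lambda_1(r_2^\star;\rho_2)=\mathfrak m(\rho_2)$$
proves strict monotonicity.

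Point (a) is a routine compactness remark: by Proposition \ref{main3} the minimizer is unique and interior to $(\rho,1)$, and $\Lambda_1(r;\rho)\to+\infty$ as $r\to\rho^+$ or $r\to 1^-$; consequently, for $\rho$ varying in a small neighborhood of any $\rho_0\in(0,1)$, the minimization can be confined to a compact subinterval independent of $\rho$, and joint continuity of $\Lambda_1$ in $(r,\rho)$ transfers to $\mathfrak m$. The only real conceptual step is the maximum-principle argument for (b); once that is in place, the remainder of the proof is essentially formal.
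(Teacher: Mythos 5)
Your argument is correct, but it takes a genuinely different route from the paper's. The paper does not prove monotonicity of $\mathfrak m$ directly: it exploits the sign equivalence in \eqref{iii} to pass to $h(\rho)=\min_r f(r)$, which by the computation in Section \ref{segno} equals $f(\sqrt\rho)$ and has the explicit series representation \eqref{h1}. Since $\rho\mapsto \frac{2\rho^{(2m+N-2)/2}}{1+\rho^{(2m+N-2)/2}}$ is strictly increasing and the coefficients $\alpha_{k,m,N}$ are nonnegative (with $\alpha_{k,0,N}=k>0$), strict monotonicity of $h$ drops out term by term, and the unique zero of $\mathfrak m$ is then characterized implicitly by \eqref{sogliadef}. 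You instead prove strict monotonicity of $\mathfrak m$ itself via domain monotonicity of the Dirichlet Green function: for $\rho_1<\rho_2$, the harmonic function $G_{\rho_1}(\cdot,y)-G_{\rho_2}(\cdot,y)$ on $\Omega_{\rho_2}$ is positive by the strong maximum principle, which simultaneously yields $G_{\rho_1}>G_{\rho_2}$ and $\tau_{\rho_1}<\tau_{\rho_2}$, whence $\Lambda_1(\cdot;\rho_1)<\Lambda_1(\cdot;\rho_2)$ pointwise; comparing at the minimizer $r_2^\star$ of $\Lambda_1(\cdot;\rho_2)$ then gives $\mathfrak m(\rho_1)<\mathfrak m(\rho_2)$. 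Both arguments are sound. The paper's computation is tied to the annulus and the explicit representation of $G_\rho$ via zonal harmonics, but it buys the quantitative estimate \eqref{contorhok} on $\rho_k$ used in Remark \ref{soglia}; your maximum-principle argument is softer, more conceptual, and would generalize to any monotone family of nested domains, but on its own gives no lower bound on $\rho_k$. One small remark: your continuity step (a) does not actually need Proposition \ref{main3} -- coercivity of $\Lambda_1(\cdot;\rho)$ at the endpoints plus joint continuity already confines the minimizer to a $\rho$-uniform compact subinterval.
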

\begin{proof}
By (ii) of \eqref{iii} it follows that $$\mathfrak m(\rho)=0\quad \iff\quad \min_{r\in(\rho, 1)} f(r)=0.$$ Moreover, in the previous section we have shown that there exists $\rho_k$ (near $1$) so that $\mathfrak m(\rho)>0$ when $\rho>\rho_k$ and there is also some $\bar \rho_k>0$ (near $0$) such that $\mathfrak m(\rho)<0$ when $\rho<\bar\rho_k$. By  \eqref{fr1}, we have that
\begin{equation}\label{h1}h(\rho)=\min_{r\in (\rho, 1)}f(r)=\frac{1}{\omega_{N-1}(N-2)}\left(\sum_{m=0}^{+\infty}\alpha_{k, m, N}\frac{2\rho^{\frac{2m+N-2}{2}}}{1+\rho^{\frac{2m+N-2}{2}}}-\mathfrak C_{k, N}\right).\end{equation}

\noindent It is simple to see (by using the non-negativity on $\alpha_{k, m, N}$ proved in the previous section) that $h(\rho)$ is strictly increasing. Hence $h(\rho)$ has a unique zero and so does $\mathfrak{m}$.
%
\end{proof}
\begin{remark}\label{soglia}
From Proposition \ref{unicozero} it follows that the threshold is uniquely determined, i.e. for any $k\geq 2$ there is a unique $\rho_k$ so that $$\min_{r\in(\rho, 1)}\Lambda_1(r)>0\quad\mbox{for}\,\, \rho>\rho_k\,\, \mbox{and}\,\, \min_{r\in(\rho, 1)}\Lambda_1(r)<0\quad\mbox{for}\,\, \rho<\rho_k.$$ Moreover $\rho_k\in(0, 1)$ is defined by the relation $ \mathfrak m(\rho_k)=0$, namely (recall \eqref{fr} and \eqref{minQm})
 \beq\label{sogliadef}
- \frac{\mathfrak C_{k,N} }{N-2}    + \sum_{m=0}^{+\infty}\frac{1}{2m+N-2}\frac{2\rho_k^{\frac{2m+N-2}{2}}}{1+\rho_k^{\frac{2m+N-2}{2}}} \left(d_m+\sum_{j=1}^{k-1}Z_{m} \left( \frac{\xi_1}{|\xi_1|}, \frac{\xi_{j+1}}{|\xi_{j+1}|} \right)\right)=0.
 \eeq

\noindent Although it is impossible to determine $\rho_k$ explicitly, we can estimate it from below. Indeed, arguing as in \eqref{negativo} we get
$$\begin{aligned}
f(\rho)&=- \frac{\mathfrak C_{k,N} }{N-2}    + \sum_{m=0}^{+\infty}\frac{1}{2m+N-2}\frac{2\rho^{\frac{2m+N-2}{2}}}{1+\rho^{\frac{2m+N-2}{2}}} \left(d_m+\sum_{j=1}^{k-1}Z_{m} \left( \frac{\xi_1}{|\xi_1|}, \frac{\xi_{j+1}}{|\xi_{j+1}|} \right)\right)\\
&\leq  \frac{1}{N-2}\underbrace{\left[2k(N-2)^{N-3}\frac{\rho^{\frac{N-2}{2}}}{(1-\rho)^{N-2}}-\mathfrak C_{k, N}\right]}_{:=\psi_k(\rho)},\end{aligned}$$ and $$\psi_k(\rho)=0\,\, \iff \,\, \frac{\rho}{(1-\rho)^2}=\left(\frac{\mathfrak C_{k, N}}{2k (N-2)^{N-3}}\right)^{\frac{2}{N-2}}:=\mathfrak a_{k, N}.$$ Hence it follows that \beq\label{contorhok}\rho_k >\frac{2\mathfrak a_{k, N}+1-\sqrt{4\mathfrak a_{k, N}+1}}{2\mathfrak a_{k, N}}.\eeq
The last quantity in the right hand side of \eqref{contorhok} goes to $1$ as $k\to+\infty$. Indeed,  we claim first that $\frac{\mathfrak C_{k, N}}{k}\to +\infty$ as $k\to+\infty$ since as $k\to+\infty$
 $$\frac{\mathfrak C_{k, N}}{k}>\frac{1}{k}\frac{1}{2^{N-2}}\sum_{j=1}^{k-1}\frac{1}{\sin\frac{\pi j}{k}}>\frac{1}{2^{N-2}\pi}\sum_{j=1}^{k-1}\frac 1 j \xrightarrow{k\to+\infty}+\infty.$$
Hence $\mathfrak a_{k, N}\to+\infty$ and $\rho_{k}\to 1$ as $k\to+\infty$.
\\ We remark that, when $N=4$, $\mathfrak C_{k, N}=\frac{k^2-1}{12}$ and hence $\frac{\mathfrak C_{k, N}}{k}$ is increasing in $k$.\\
In the general case, we are not able to prove that $\frac{\mathfrak C_{k, N}}{k}$ is increasing with $k$, but only that
$\mathfrak C_{k, N}$ which is increasing with $k$. Indeed, if $k$ is even
$$\begin{aligned}\mathfrak C_{k, N}&=\frac{1}{2^{N-2}}\sum_{j=1}^{\frac k 2-1}\frac{1}{\left(\sin\frac{\pi j}{k}\right)^{N-2}}+\frac{1}{2^{N-2}}+\frac{1}{2^{N-2}}\sum_{j=\frac k 2 +1}^{k-1}\frac{1}{\left(\sin\frac{\pi j}{k}\right)^{N-2}}\\
&=\frac{2}{2^{N-2}}\sum_{j=1}^{\frac k 2-1}\frac{1}{\left(\sin\frac{\pi j}{k}\right)^{N-2}}+\frac{1}{2^{N-2}}.
\end{aligned}$$  Instead, if $k$ is odd
$$\begin{aligned}\mathfrak C_{k, N}&=\frac{1}{2^{N-2}}\sum_{j=1}^{\frac{ k-1}{ 2}}\frac{1}{\left(\sin\frac{\pi j}{k}\right)^{N-2}}+\frac{1}{2^{N-2}}\sum_{j=\frac{ k-1}{ 2} +1}^{k-1}\frac{1}{\left(\sin\frac{\pi j}{k}\right)^{N-2}}=\frac{2}{2^{N-2}}\sum_{j=1}^{\frac{ k-1}{ 2}}\frac{1}{\left(\sin\frac{\pi j}{k}\right)^{N-2}}\end{aligned}$$ and the monotonicity easily follows.
\end{remark}

\begin{proof}[Proof of Theorems \ref{main1} and \ref{main2}]
Since for problem $(\mathcal{BN}^\e_+)$ in dimension $N=3$ and $N=4$ the proof is done respectively in \cite{MS} and \cite{PRV} we omit it, and we focus on the remaining cases. \\
Now, by Lemma \ref{lemma1} it follows that problems $(\mathcal P^\e_\pm)$ and $(\mathcal{BN}^\e_\pm)$ have a solution of the form \eqref{sol} if and only if the reduced functional $\mathfrak J_\e(d, r)$ has a critical point.\\\\
Let us consider problem $(\mathcal P^\e_-)$ with $N\geq 3$ and problem $(\mathcal{BN}^\e_+)$ with $N\geq 5$.\\ 
By Proposition \ref{exp}, this means to find a stable critical point of the function $\Psi$  (see \eqref{psi}) and this is easy to show provided $\Lambda_1(r)>0$.\\ By Proposition \ref{unicozero} and \eqref{condmin} it follows that for any $k\geq 2$ there exists $\rho_k>0$ such that  $\Lambda_1(r)>0$ in $(\rho, 1)$. \\\\ Moreover, by Proposition \ref{unicozero} it follows that for $\rho<\rho_k$, $\min_{r\in (\rho, 1)}\Lambda_1(r)<0$ and by Proposition \ref{main3} there is only one critical point. Hence there is no solution of the form \eqref{sol} of  $(\mathcal P^\e_-)$ with $N\geq 3$ and problem $(\mathcal{BN}^\e_+)$ with $N\geq 5$.\\\\
Let us consider problem $(\mathcal P^\e_+)$ with $N\geq 3$ and problem $(\mathcal{BN}^\e_-)$ with $N\geq 5$.  
Again we look for a stable critical point of the function $\Psi(d, r)$. In this case, by Proposition \ref{main3} and by \eqref{condmin1} there exists $\rho_k>0$ so that the unique minimum  $\min_{r\in(\rho, 1)}\Lambda_1(r)<0$ for $\rho<\rho_k$. Hence, let us choose $\rho <a_k<b_k<1$ so that $\Lambda_1(r)<0$ for any $r\in (a_k, b_k)$. Then, for $\mu>0$ fixed and sufficiently small, the following relations hold:
\begin{equation}\label{rel}
\frac{\partial}{\partial d} \Psi(\mu, r)>0\quad\mbox{and}\quad \frac{\partial}{\partial d} \Psi(\mu^{-1}, r)<0\quad \forall\,\, r\in [a_k, b_k]\end{equation} and 
\begin{equation}\label{rel1}
\frac{\partial}{\partial r} \Psi(d, a_k)<0\quad\mbox{and}\quad \frac{\partial}{\partial r} \Psi(d, b_k)>0\quad \forall\,\, d\in [\mu, \mu^{-1}].\end{equation}
Let us set $\mathcal R:= (\mu, \mu^{-1})\times (a_k, b_k)$ and let $(d_1, r_1)$ be the center point of $\mathcal R$.\\
Let us consider the homotopy $$H_t(d, r)=t\nabla \Psi(d, r)+(1-t)(-(d-d_1), r-r_1),\quad t\in [0, 1].$$ Then from \eqref{rel} and \eqref{rel1} we see that ${\rm deg}(H_t, \mathcal R, 0)$ is well defined and constant for $t\in [0, 1]$. It follows that ${\rm deg}(\nabla \Psi, \mathcal R, 0)=-1$. Since $\mathfrak J_\e$ is a small $C^1-$ uniform perturbation of $\Psi$, we conclude that ${\rm deg}(\nabla \mathfrak J_\e, \mathcal R, 0)=-1$ for all sufficiently small $\e$. The thesis easily follows.\\ Moreover for $\rho>\rho_k$, $\Lambda_1(r)>0$ and hence no solution of the form \eqref{sol} can be found.
\end{proof}

 \end{document}